\newtheorem{thm}{Theorem}[section]
\newtheorem{cor}[thm]{Corollary}
\newtheorem{lem}[thm]{Lemma}
\newtheorem{prop}[thm]{Proposition}
\theoremstyle{definition}
\newtheorem{defn}[thm]{Definition}
\theoremstyle{remark}
\newtheorem{rem}[thm]{Remark}
\newtheorem{exm}[thm]{Example}
\numberwithin{equation}{section}
\newcommand{\R}{\mathbb{R}}
\newcommand{\N}{\mathbb{N}}
\newcommand{\T}{\mathbb{T}}
\newcommand{\C}{\mathbb{C}}
\newcommand{\fA}{\mathcal{A}}
\newcommand{\fB}{\mathcal{B}}
\newcommand{\fC}{\mathcal{C}}
\newcommand{\fR}{\mathcal{R}}
\newcommand{\fS}{\mathcal{S}}
\newcommand{\Fa}{\mathcal{F}_\fA}
\newcommand{\Hc}{\mathcal{H}}
\newcommand{\om}{\omega}
\newcommand{\Oma}{\Omega_\fA}
\newcommand{\supp}{\text{supp}}
\newcommand{\Sp}{\text{Sp}}
\newcommand{\pf}{\text{PF}}
\newcommand{\la}{\langle}
\newcommand{\ra}{\rangle}
\begin{document}

\title[]
{Quasi-Hermitian locally compact groups are amenable}

\author{Ebrahim Samei}
\address{Department of Mathematics and Statistics, University of Saskatchewan, Saskatoon, Saskatchewan, S7N 5E6, Canada}
\email{samei@math.usask.ca}


\author{Matthew Wiersma}
\address{Department of Mathematical and Statistical Sciences, University of Alberta, Edmonton, Canada}
\email{mwiersma@ualberta.ca}

\footnote{{\it Date}: \today.

2010 {\it Mathematics Subject Classification.} Primary 43A20, 43A15, 20J06.

{\it Key words and phrases.} Spectral subalgebras, Hermitian Banach $*$-algebras, Hermitian locally compact groups, amenability, groups with rapid decay, Kunze-Stein groups.

The first named author was partially supported by NSERC Grant no. 409364-2015.

The second named author was partially support by NSERC Grant no. 2018-05681.}







\maketitle


\begin{abstract}
	A locally compact group $G$ is \emph{Hermitian} if the spectrum ${\rm Sp}_{L^1(G)}(f)\subseteq\R$ for every $f\in L^1(G)$ satisfying $f=f^*$, and \emph{quasi-Hermitian} if ${\rm Sp}_{L^1(G)}(f)\subseteq\R$ for every $f\in C_c(G)$ satisfying $f=f^*$. We show that every quasi-Hermitian locally compact group is amenable. This, in particular, confirms the long-standing conjecture that every Hermitian locally compact group is amenable, a problem that has remained open since the 1960s. Our approach involves introducing the theory of ``spectral interpolation of triple Banach $*$-algebras'' and applying it to a family ${\rm PF}_p^*(G)$ ($1\leq p\leq \infty$) of Banach $*$-algebras related to convolution operators that lie between $L^1(G)$ and $C^*_r(G)$, the reduced group C$^*$-algebra of $G$. We show that if $G$ is quasi-Hermitian, then ${\rm PF}_p^*(G)$ and $C^*_r(G)$ have the same spectral radius on Hermitian elements in $C_c(G)$ for $p\in (1,\infty)$, and then deduce that $G$ must be amenable. We also give an alternative proof to Jenkin's result in \cite{Jenk1} that a discrete group containing a free sub-semigroup on two generators is not quasi-Hermitian. This, in particular, provides a dichotomy on discrete elementary amenable groups: either they are non quasi-Hermitian or they have subexponential growth. Finally, for a non-amenable  group $G$ with either rapid decay or Kunze-Stein property, we prove the stronger statement that ${\rm PF}_p^*(G)$ is not ``quasi-Hermitian relative to $C_c(G)$'' unless $p=2$.
\end{abstract}

\section{Introduction}

Let $\fA$ be a Banach $*$-algebra, i.e. a (not necessarily unital) Banach algebra equipped with an isometric involution $* : \fA\to \fA$. Recall that $a\in \fA$ is called Hermitian if $a=a^*$. If $\fS$ is any subset of $\fA$, we let
$$ \fS_h=\{a\in \fS : a=a^*\}$$
denote the set of Hermitian elements in $\fS$.
The Banach $*$-algebra $\fA$ is \emph{Hermitian} if
$$ {\rm Sp}_{\fA}(a)\subseteq \R$$
for every $a\in \fA_h$, where $\rm{Sp}_{\fA}(a)$ denotes the spectrum of $a\in \fA$. Hermitian Banach $*$-algebras are one of the most important subclasses of Banach $*$-algebras. For instance, Hermitian Banach $*$-subalgebras, or more generally Hermitian Fr\'{e}chet $*$-subalgebras, of C$^*$-algebras appear in many areas of mathematics such as approximation theory, time-frequency analysis and signal processing, non-commutative geometry and geometric group theory (see \cite{GK1}, \cite{GK2}, \cite{GL}, \cite{black-cuntz}, \cite{rennie}, \cite{Jol}, \cite{Chat} and references therein). We also refer the reader to \cite[Section 11.4]{Pal2} for a very nice survey of the topic (see also \cite[Sections 35 and 41]{BD}).


Let $G$ be a locally compact group. The group algebra $L^1(G)$ is a Banach $*$-algebra with involution given by
$$ f^*(s)=\overline{f(s^{-1})}\Delta(s^{-1})$$
for $f\in L^1(G)$ and $s\in G$, where $\Delta: G\to (0,\infty)$ denotes the modular function of $G$. The locally compact group $G$ is \emph{Hermitian} if $L^1(G)$ is Hermitian. Na\u\i mark initiated the study of Hermitian groups in 1956 by showing that the group algebra of $SL(2,\C)$ is not Hermitian (see \cite{Naimark}). The class of Hermitian locally compact groups includes abelian groups, compactly generated groups of polynomial growth, and the $ax+b$-group (see, for example, \cite{Los}, \cite{FGL}). The literature on Hermitian groups is vast, particularly from the 1960s and 1970s, and we make little attempt at a summary. We instead refer the reader to \cite[p. 1441--1444 and \S 12.6.22]{Pal2} and \cite{Palma1} for nice historical surveys. 

The biggest open problem regarding Hermitian locally compact groups is, arguably, whether every such group is amenable. We provide an affirmative answer to this long-standing open problem in this paper. The authors are unaware of when this problem first arose, but it was certainly being considered by 1963 when Hulanicki announced the result was true for discrete groups in \cite{Hul Announce} and that a proof would appear in a forthcoming paper. Unfortunately the proof was flawed and the forthcoming paper referred to in \cite{Hul Announce} never appeared in the literature.

The question of whether every Hermitian locally compact group is amenable has previously best been answered for the class of almost connected groups in 1978.
\begin{thm}[Palmer \cite{Pal Rocky}]
	A Hermitian almost connected locally compact group is amenable.
\end{thm}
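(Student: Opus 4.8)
The plan is to prove the contrapositive: if an almost connected locally compact group $G$ is not amenable, then it is not Hermitian. The first reduction is to the case of a connected Lie group. By the Gleason--Yamabe structure theorem an almost connected group is a projective limit of Lie groups $G/K_\alpha$, where the $K_\alpha$ are compact normal subgroups. Amenability is detected by this system: if $G$ is non-amenable then some $G/K_\alpha$ must be non-amenable, since $G$ is an extension of $G/K_\alpha$ by the compact (hence amenable) group $K_\alpha$ and amenability is closed under such extensions. Moreover the Hermitian property passes to these quotients, because the canonical map $L^1(G)\to L^1(G/K_\alpha)$ is a surjective $*$-homomorphism: symmetrizing any preimage of a Hermitian $b$ and using that a (unital extension of a) surjective $*$-homomorphism satisfies $\Sp_{L^1(G/K_\alpha)}(b)\subseteq\Sp_{L^1(G)}(a)\subseteq\R$. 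Hence I may replace $G$ by a non-amenable Hermitian Lie quotient, and then pass to its open, finite-index identity component $G_0$, which remains non-amenable and stays Hermitian because $L^1(G_0)$ is a spectral $*$-subalgebra of $L^1(G)$ for an open subgroup. This reduces the problem to a \emph{connected} Lie group.

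For a connected Lie group I would invoke the Levi--Mal'cev decomposition. Let $R$ be the radical of $G$; then $S:=G/R$ is a connected semisimple Lie group, and a connected Lie group is amenable exactly when it is a compact extension of a solvable group, i.e.\ when $S$ is compact. Since $G$ is non-amenable, $S$ is a non-compact connected semisimple Lie group, and by the quotient argument above $S$ is again Hermitian. The whole problem is thereby reduced to the core assertion that \emph{a non-compact connected semisimple Lie group is not Hermitian}.

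To attack the semisimple case I would localize to the rank-one building block. A non-compact connected semisimple Lie group has real rank at least one, so it contains a closed subgroup locally isomorphic to $\mathrm{SL}(2,\R)$, arising from a restricted root and its associated $\mathrm{SL}_2$-triple. The group $\mathrm{SL}(2,\R)$ (as with $\mathrm{SL}(2,\C)$ in Naimark's original computation) is not Hermitian: one exhibits a Hermitian $f\in C_c$ whose image under an analytically continued (complementary) principal series representation is a bounded operator with non-real spectrum, forcing $\Sp_{L^1}(f)\not\subseteq\R$. The aim is to reproduce such a non-unitary but uniformly bounded representation directly on $S$, producing a Hermitian element of $L^1(S)$ with non-real spectrum and contradicting the Hermitian property of $S$.

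The main obstacle is precisely this last transfer. The Hermitian property is \emph{not} inherited by arbitrary closed subgroups, so I cannot simply quote non-Hermitianness of the $\mathrm{SL}(2,\R)$-subgroup; I must instead certify the offending spectrum intrinsically on $S$. The cleanest route I expect is representation-theoretic: use the spherical principal series $\pi_s$ of $S$ (or of the rank-one subgroup), which depend holomorphically on a complex parameter $s$ and remain uniformly bounded on a strip strictly larger than the unitary axis, namely the complementary range. For a suitable Hermitian $f\in C_c(S)$ the operator-valued holomorphic function $s\mapsto\pi_s(f)$ should then carry an eigenvalue curve that leaves $\R$, witnessing $\Sp_{L^1(S)}(f)\not\subseteq\R$. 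The delicate analytic points are controlling the norms $\|\pi_s(f)\|$ across the complementary strip and ensuring that the continued eigenvalue genuinely becomes non-real rather than merely non-unitary; this is where the heart of the argument lies.
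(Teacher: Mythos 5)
Your two reductions are fine, and worth crediting: passing to a non-amenable Lie quotient $G/K_\alpha$ via Gleason--Yamabe works because $L^1(G)\to L^1(G/K_\alpha)$ (averaging over the compact normal subgroup) is a surjective $*$-homomorphism and $\Sp_B(\varphi(a))\subseteq\Sp_A(a)\cup\{0\}$ for surjective $\varphi$; passing to the identity component works because an almost connected Lie group has finite component group, $L^1(G_0)$ is a closed $*$-subalgebra of $L^1(G)$, and the boundary-of-spectrum fact $\partial\Sp_B(a)\subseteq\Sp_A(a)$ (the same fact this paper invokes just before Theorem \ref{T:interpolation triple-general alg}) shows Hermitianness descends to closed $*$-subalgebras; and the Levi--Mal'cev step correctly reduces everything to: \emph{a non-compact connected semisimple Lie group is not Hermitian}.

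But that last assertion is the entire theorem, and you do not prove it --- you state a research program and candidly flag its two hardest points (uniform boundedness of the analytically continued spherical principal series on the complementary strip, and forcing the continued eigenvalue off $\R$) as open. This is a genuine gap, not a routine verification: Hermitianness does not pass to closed non-open subgroups, so the $\mathrm{SL}(2,\R)$-subgroup gives you nothing directly, and the intrinsic construction on $S$ of a Hermitian $f\in C_c(S)$ with non-real $L^1$-spectrum is precisely the content of Jenkins' characterization of Hermitian reductive Lie groups \cite{Jenk2}. In other words, your completed argument would be the classical proof --- the paper itself gives no proof of this statement but records that Palmer \cite{Pal Rocky} deduced it exactly from Jenkins' result --- so what you have written is a correct reduction of Palmer's theorem to Jenkins' theorem, with Jenkins' theorem unproved. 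Note also that within this paper the statement now follows with no structure theory at all: a Hermitian group is quasi-Hermitian, hence amenable by Theorem \ref{T:quasi-Hermitian implies amenable} and Corollary \ref{C:MainCor}, which is a strictly stronger and shorter route than the one you are attempting.
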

\noindent This result was deduced from Jenkin's characterization of Hermitian reductive Lie groups (see \cite{Jenk2}). The best known result on the topic for discrete groups was the following.
\begin{thm}[Jenkins \cite{Jenk1}]
	A discrete group containing a free sub-semigroup on two generators is not Hermitian.
\end{thm}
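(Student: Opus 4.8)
The plan is to disprove Hermitian-ness directly: by definition it suffices to exhibit a single Hermitian element $h=h^*\in C_c(G)=\C[G]$ whose spectrum $\Sp_{\ell^1(G)}(h)$ is not contained in $\R$. (Since the witness will live in $C_c(G)$, this simultaneously shows $G$ is not quasi-Hermitian.) The first point is to understand \emph{which} elements cannot work. If $h$ is probed only through the $*$-representations of $\ell^1(G)$ — the one-dimensional characters coming from the abelianization of $G$, or the regular representation $\lambda_2$ on $\ell^2(G)$ — then it maps to a self-adjoint operator and contributes only real spectrum. Worse, naive attempts to force non-real spectrum by making $\rho_{\ell^1(G)}(h)$ large tend to fail, because for Hermitian elements assembled symmetrically from the free sub-semigroup the characters of $G$ already attain the full $\ell^1$-spectral radius, so the detected spectrum stays real. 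The non-real spectrum must therefore be produced by a representation of $\ell^1(G)$ that is \emph{not} a $*$-representation.

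The mechanism I would use is the family of convolution representations on $\ell^p(G)$. For each $1\le p<\infty$ the left regular representation $\lambda_p\colon \ell^1(G)\to B(\ell^p(G))$ is a contractive algebra homomorphism, so after adjoining units one has the spectral inclusion $\Sp\bigl(\lambda_p(h)\bigr)\subseteq \Sp_{\ell^1(G)}(h)$. For $p=2$ this is the regular $*$-representation and yields only real spectrum, but for $p\ne 2$ there is no compatible adjoint, and $\lambda_p(h)$ need not be spectrally real even when $h=h^*$. Hence it suffices to find one $p\ne 2$ and one $h=h^*\in\C[G]$ for which the convolution operator $\lambda_p(h)$ has a non-real spectral value $\zeta$: this $\zeta$ then lies in $\Sp_{\ell^1(G)}(h)$, contradicting Hermitian-ness. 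This is precisely the $p$-deformation philosophy underlying the algebras $\pf_p^*(G)$, specialized to the free sub-semigroup situation, with the role of $p\ne 2$ mirroring the abstract's assertion that $\pf_p^*(G)$ fails to be quasi-Hermitian unless $p=2$.

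This is where the free sub-semigroup $\la s,t\ra^+$ enters. On the closed subspace of $\ell^p(G)$ spanned by the positive words, $\lambda_p(s)$ and $\lambda_p(t)$ behave like $p$-analogues of isometries with orthogonal ranges, i.e. like creation operators on a $p$-Fock space — this is exactly the unique readability of words guaranteed by freeness. Shifts of this kind have two-dimensional, disk-like spectra rather than the circle spectrum that invertibility forces on $G$ as a whole, and this is the structural reason a Hermitian combination (a suitable deformation of $s+s^{-1}+t+t^{-1}$) should acquire complex spectral values under $\lambda_p$. Concretely I would attempt to construct, for a suitable $p$ and $\zeta\notin\R$, an explicit (approximate) eigenvector for $\lambda_p(h)-\zeta$ as a geometric-type series $\sum_w c_w\,\delta_w$ summed over $\la s,t\ra^+$ with $\ell^p$-summable but not $\ell^2$-summable coefficients. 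Freeness rules out any word collision, so the $\ell^p$-norms and the action of $\lambda_p(h)$ can be evaluated letter by letter, and one tunes $p$ and the coefficients so that the series realizes $\zeta$ in the spectrum.

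The main obstacle is exactly this last step: upgrading the heuristic ``shifts have disk spectrum'' to an honest location of a non-real point of $\Sp\bigl(\lambda_p(h)\bigr)$. Norm estimates alone do not suffice, since the mere strict inequality $\rho_{\ell^1(G)}(h)>\|\lambda_2(h)\|$ does not by itself push the extra spectrum off the real axis; one genuinely needs a lower bound or an explicit (approximate) eigenvector, and this is the precise point at which non-amenability — encoded by the free sub-semigroup and the resulting exponential growth of the positive cone — must be used quantitatively. I expect the cleanest route is to confine the entire computation to the $p$-Fock space of positive words, where everything is governed by the abstract free semigroup, and then to transfer the conclusion back to $\ell^1(G)$ through the spectral inclusion above; the $\pf_p^*(G)$ machinery is designed to systematize exactly this transfer.
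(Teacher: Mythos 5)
There is a genuine gap, and you have in fact located it yourself: the decisive step --- actually exhibiting a non-real point of $\Sp(\lambda_p(h))$ for some Hermitian $h\in\C[G]$ and some $p\neq 2$ --- is deferred to ``one tunes $p$ and the coefficients,'' and this is precisely where the plan breaks down. A Hermitian element of $\C[G]$ has inversion-symmetric support, so any candidate $h$ (your deformation of $s+s^{-1}+t+t^{-1}$) must involve $s^{-1}$ and $t^{-1}$. But the hypothesis only makes the sub-semigroup $\la s,t\ra^+$ free; the \emph{subgroup} it generates need not be free (solvable groups of exponential growth contain free sub-semigroups but no free subgroups). Consequently convolution by $\delta_{s^{-1}}$ neither preserves your ``$p$-Fock space'' of positive words nor is controlled by unique readability: products mixing positive and inverse letters can collide in $G$ in ways the hypothesis says nothing about. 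So the proposed reduction ``confine the entire computation to the $p$-Fock space of positive words'' is unavailable for the very elements you must test, and no substitute lower-bound mechanism is given. (Smaller issues: $G$ may have trivial abelianization, so the characters you invoke need not exist; and transferring spectral points of a restriction to an invariant, non-reducing subspace back to the ambient operator requires an approximate-eigenvector or boundary-spectrum argument that you gesture at but do not set up.)

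The paper's argument (Remark \ref{R:free sub semigroup-not qH}) is structured exactly to sidestep this obstruction: it never produces a Hermitian witness with non-real spectrum. Theorem \ref{T:quasi-Hermitian implies amenable}$(iv)$ shows that quasi-Hermitian forces $\Sp_{\ell^1(G)}(g)=\Sp_{C^*_r(G)}(g)$ for \emph{every} $g\in c_c(G)$, Hermitian or not, so a mere spectral-radius discrepancy at a single non-Hermitian element suffices. Taking $g=a_0\delta_t+a_1\delta_{st}+a_2\delta_{s^2t}$ with $|a_i|=\frac{1}{3}$ and $\sup_{z\in\T}|a_0+a_1z+a_2z^2|<1$, freeness is used only where it is actually valid --- on positive words --- to see that the $3^n$ terms of $g^n$ never coincide, whence $\|g^n\|_{\ell^1(G)}=1$ and $r_{\ell^1(G)}(g)=1$; meanwhile $r_{C^*_r(G)}(g)<1$ because $a_0\delta_e+a_1\delta_s+a_2\delta_{s^2}$ lies in the commutative C$^*$-algebra generated by the unitary $\delta_s$ (so its norm equals its spectral radius, which is at most the $\ell^1$-radius, bounded via spectral mapping and maximum modulus) and $\delta_t$ is unitary. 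Carrying out your direct approach would essentially amount to redoing Jenkins' original long computation; the interpolation machinery behind Theorem \ref{T:quasi-Hermitian implies amenable} is what lets the paper replace it with a few lines.
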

\noindent In particular, every discrete group containing a non-commutative free group is not Hermitian. Beyond the realm of non-amenable groups, this theorem also implies the discrete group $\mathbb Q\rtimes\mathbb Q^*$ is not Hermitian.

More recently, in \cite{Palma1}, Palma made a systematic study of Hermitian locally compact group using the concept of the ``capacity" of a Hermitian element of a Banach $*$-algebra. His work was motivated by the work
of Jenkins (see \cite{Jenk2}) and Fountain, Ramsay and Williamson (see \cite{FRW}). He provided many examples of non-Hermitian locally compact groups including certain totally disconnected groups and certain torsion groups.

The main result of this paper applies to a class of locally compact groups which we call quasi-Hermitian.
\begin{defn}\label{D:qH-group}
	A locally compact group $G$ is \emph{quasi-Hermitian} if
	$\Sp_{L^1(G)}(f)\subseteq \R$
	for every $f\in C_c(G)_h$.
\end{defn}
\noindent The class of quasi-Hermitian locally compact groups is strictly larger than Hermitian locally compact groups, and will be discussed this in more detail later in the paper. The most significant part of our main result (Theorem \ref{T:quasi-Hermitian implies amenable}) is the following theorem.

\begin{thm}\label{Thm:IntroMain}
A locally compact group $G$ is quasi-Hermitian if and only if for every $f\in C_c(G)$, $\Sp_{L^1(G)}(f)=\Sp_{C^*_r(G)}(f)$. In particular, a quasi-Hermitian locally compact group is amenable.
\end{thm}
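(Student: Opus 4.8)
The converse implication is immediate: if $\Sp_{L^1(G)}(f)=\Sp_{C^*_r(G)}(f)$ for every $f\in C_c(G)$, then for $f\in C_c(G)_h$ these common spectra lie in $\R$, since self-adjoint elements of the C$^*$-algebra $C^*_r(G)$ have real spectrum, and so $G$ is quasi-Hermitian. The whole content is therefore the forward implication, and my plan is to derive both assertions from the single spectral-radius identity
\[
r_{L^1(G)}(h)=r_{C^*_r(G)}(h),\qquad h\in C_c(G)_h,
\]
where $r_\fA$ denotes the spectral radius in $\fA$; here and below I pass freely to unitizations, noting that the identity is unaffected by adding a real scalar and so holds equally on $C_c(G)_h+\R 1$. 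Since $C_c(G)$ embeds into $C^*_r(G)$ through each $\pf_p^*(G)$ by norm-decreasing $*$-homomorphisms, one always has $\Sp_{C^*_r(G)}(h)\subseteq\Sp_{\pf_p^*(G)}(h)\subseteq\Sp_{L^1(G)}(h)$, hence $r_{C^*_r(G)}(h)\le r_{\pf_p^*(G)}(h)\le r_{L^1(G)}(h)$, and quasi-Hermitianness forces all of these spectra into $\R$.

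The heart of the matter, and where I expect the main obstacle, is the reverse inequality $r_{\pf_p^*(G)}(h)\le r_{C^*_r(G)}(h)$ for $p\in(1,\infty)$. Complex interpolation realizes $C^*_r(G)=\pf_2^*(G)$ as the midpoint of the scale running from $\pf_p^*(G)$ to $\pf_{p'}^*(G)$, so Riesz--Thorin only controls the midpoint by the endpoints and yields nothing more than the trivial inequality $r_{C^*_r(G)}(h)\le r_{\pf_p^*(G)}(h)$; the reverse bound cannot be obtained from norm interpolation alone. The new input must be that, by quasi-Hermitianness, $h$ has real spectrum in every $\pf_p^*(G)$. This is precisely the role of the spectral interpolation theory for triple Banach $*$-algebras: applied to $\bigl(\pf_p^*(G),\,C^*_r(G),\,\pf_{p'}^*(G)\bigr)$, the real-spectrum hypothesis propagates rigidly through the interpolation scale and collapses $r_{\pf_p^*(G)}(h)$ to the constant value $r_{C^*_r(G)}(h)$ for all $p\in(1,\infty)$. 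I would then recover the endpoint identity displayed above by letting $p\to 1^+$, using $\pf_1^*(G)=L^1(G)$, the monotonicity of $p\mapsto r_{\pf_p^*(G)}(h)$ on $(1,2]$, and a lower-semicontinuity argument for the spectral radius along the scale.

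Granting the identity, amenability follows softly. If $h\in C_c(G)_h$ with $h\ge 0$, then every convolution power $h^{*n}$ is positive, so $\|h^{*n}\|_1=\bigl(\int_G h\bigr)^n=\|h\|_1^n$ and hence $r_{L^1(G)}(h)=\|h\|_1$; as $h$ is self-adjoint, $r_{C^*_r(G)}(h)=\|\lambda(h)\|$ for the left regular representation $\lambda$, and the identity becomes $\|\lambda(h)\|=\|h\|_1$. For arbitrary $f\in C_c(G)$ with $f\ge 0$ one has $f^*\ge 0$, so $h:=f+f^*\in C_c(G)_h$ is positive and $2\|f\|_1=\|h\|_1=\|\lambda(h)\|\le 2\|\lambda(f)\|$, forcing $\|\lambda(f)\|=\|f\|_1$; by Hulanicki's characterization of amenability this holds for all positive $f$ exactly when $G$ is amenable.

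Finally, the identity yields the spectral equality. Fix $h\in C_c(G)_h$ and a real $\lambda_0\notin\Sp_{C^*_r(G)}(h)$; then $k:=(h-\lambda_0)^2\in C_c(G)_h+\R 1$ is positive and invertible in $C^*_r(G)$, with $\min\Sp_{C^*_r(G)}(k)=\operatorname{dist}(\lambda_0,\Sp_{C^*_r(G)}(h))^2>0$ (note this is positive even when $\lambda_0$ lies in a gap of the spectrum). Applying the identity to the translates $c-k$ for large real $c$ matches the minima of the two spectra, giving $\min\Sp_{L^1(G)}(k)=\min\Sp_{C^*_r(G)}(k)>0$; thus $k$, and therefore $h-\lambda_0$, is invertible in $L^1(G)$, so $\lambda_0\notin\Sp_{L^1(G)}(h)$ and $\Sp_{L^1(G)}(h)=\Sp_{C^*_r(G)}(h)$. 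For general $f\in C_c(G)$ and $\lambda\notin\Sp_{C^*_r(G)}(f)$, applying this Hermitian case to $(f-\lambda)^*(f-\lambda)$ and $(f-\lambda)(f-\lambda)^*$ transfers the invertibility of $f-\lambda$ from $C^*_r(G)$ back to $L^1(G)$, completing the proof.
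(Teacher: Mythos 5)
Your skeleton is recognizably the paper's (collapse the spectral radii along the $\pf_p^*(G)$ scale, let $p\to 1^+$, then deduce amenability and upgrade the radius identity to a spectral identity), but two steps have genuine gaps, and the second is the crux of the whole theorem. First, your triple $\bigl(\pf_p^*(G),\,C^*_r(G),\,\pf_{p'}^*(G)\bigr)$ is not a nested triple of Banach $*$-algebras: since $\|f\|_{\pf_p^*(G)}=\max\{\|\lambda_p(f)\|,\|\lambda_q(f)\|\}$, one has $\pf_{p'}^*(G)=\pf_p^*(G)$ isometrically, and $C^*_r(G)=\pf_2^*(G)$ is the \emph{largest} algebra of the $*$-scale --- an endpoint, not the midpoint --- so the required containment $\fB\subseteq\fC$ fails and Theorem \ref{T:interpolation triple-general alg} does not apply; applied formally it would anyway only compare $\pf_p^*(G)$ with itself. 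The triple that works is $(L^1(G),\pf_p^*(G),C^*_r(G))$ relative to $C_c(G)$, which is quasi-Hermitian in $L^1(G)$ by hypothesis (Corollary \ref{C:Interpolation triple-pseudofunctions-L1 case}); this does produce the collapse $r_{\pf_p^*(G)}(h)=r_{C^*_r(G)}(h)$ on $C_c(G)_h$ that you want. Second, and fatally, the passage $p\to 1^+$ cannot be done softly. You need $r_{L^1(G)}(h)\le\lim_{p\to 1^+}r_{\pf_p^*(G)}(h)$, i.e.\ lower semicontinuity of the spectral radius at the endpoint $p=1$; but the spectral radius is $\inf_n\|h^n\|^{1/n}$, an infimum of norms, so along such scales it is only \emph{upper} semicontinuous, and nothing in your argument rules out a jump down as soon as $p>1$. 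The paper closes exactly this gap with a quantitative estimate that uses compact support in an essential way: if $\supp f\subseteq S$ with $S$ open, symmetric and pre-compact, H\"older's inequality gives $\|f^n\|_{L^1}=\|f^n 1_{S^n}\|_{L^1}\le \|f^{n-1}\|_{\pf_p^*(G)}\|f\|_{L^p}\,\nu(S^n)^{1/q}$, whence $r_{L^1(G)}(f)\le r_{\pf_p^*(G)}(f)\,\nu_S^{1/q}=r_{C^*_r(G)}(f)\,\nu_S^{1/q}$, and letting $q\to\infty$ kills the growth factor because $\nu_S^{1/q}\to 1$. This estimate is the missing idea; without it, or something equivalent, your argument does not close.

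Two remarks on your endgame. Your amenability argument (for $h\ge 0$ one gets $r_{L^1(G)}(h)=\|h\|_1$, hence $\|\lambda_2(f)\|=\|f\|_1$ for all $f\in C_c(G)$, $f\ge0$, hence amenability by Hulanicki's characterization) is correct and is a legitimate alternative to the paper's route, which instead deduces $C^*(G)=C^*_r(G)$ from Proposition \ref{P:dense Spec subalg-same C envelope}. But your derivation of the full spectral equality leans on the claim that the radius identity ``is unaffected by adding a real scalar and so holds equally on $C_c(G)_h+\R 1$,'' and this is false as stated: equality of spectral radii at $a$ says nothing about $a+c1$ (nested real spectra $[-1,1]$ and $\{-1\}$ share radius $1$ but give radii $2$ and $0$ after translating by $1$), and the growth estimate above genuinely uses compact support, so it does not extend verbatim to the unitization. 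Hence your minima-matching step for $k=(h-\lambda_0)^2$ rests on an identity you have not established. The paper sidesteps all of this by quoting the Barnes--Hulanicki theorem (Theorem \ref{T:Barnes Theorem}): once $r_{L^1(G)}(f)=\|\lambda_2(f)\|$ holds on $C_c(G)_h$, that theorem directly yields $\Sp_{L^1(G)}(f)=\Sp_{C^*_r(G)}(f)$ for every $f\in C_c(G)$; its proof is essentially the polynomial-separation argument your translation trick is trying to imitate, carried out correctly inside the algebra rather than its unitization.
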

\noindent This, in particular, implies that every Hermitian locally compact group is necessarily amenable. Our proof of Theorem \ref{Thm:IntroMain} is surprisingly short and involves a completely new approach to the problem, which we summarize below.


We begin with an analysis and discussion of spectral properties for Banach $*$-algebras. We then consider a family of triple Banach $*$-algebras $\fA \subseteq \fB \subseteq \fC$ whose spectral radii satisfy an interpolation relation; we call it ``spectral interpolation of triple Banach $*$-algebras" (Definition \ref{D:interpolation triple Banach $*$-algebras}). For such a family of algebras, we present conditions that ensure $r_\fB(a)=r_\fC(a)$ for ``many'' $a\in\fA_h$  (see Theorem \ref{T:interpolation triple-general alg}). Subsequently, for a locally compact group $G$, we consider a class of Banach $*$-algebras ${\rm PF}^*_p(G)$ ($1\leq p\leq \infty$) of convolution operators, introduced in \cite{KY} and \cite{LY}, and show that for $1<p<2$, $(L^1(G),\pf_p^*(G),C^*_r(G))$ is a spectral interpolation of triple $*$-semisimple Banach $*$-algebras (Definition \ref{D:involutive pseudofunctions} and Corollary \ref{C:Interpolation triple-pseudofunctions-L1 case}). We then apply our method to show if $G$ is quasi-Hermitian, then for every $1<p<2$ and $f\in C_c(G)_h$,
$$r_{{\rm PF}_p^*(G)}(f)=r_{C^*_r(G)}(f).$$
Thus, by letting $p\to 1^+$ and making a careful analysis of the spectral radii relations between these algebras, we deduce that $r_{L^1(G)}(f)=r_{C^*_r(G)}(f)$ for all $f\in C_c(G)_h$ (Theorem \ref{T:quasi-Hermitian implies amenable}). This, in particular, implies that the full and reduced C$^*$ algebra of $G$ must coincides so that $G$ must be amenable. Moreover, this also implies that a discrete group containing a free sub-semigroup on two generators is not quasi-Hermitian thus providing an alternative proof to the result of Jenkin in \cite{Jenk1} (see Remark \ref{R:free sub semigroup-not qH}). In particular, using the structure theory of discrete elementary amenable groups, we obtain that a discrete elementary amenable group is quasi-Hermitian if and only if it has a subexponential growth (Corollary \ref{C:qH-elementary amenable}).

The paper ends with asking whether an analogue of Theorem \ref{Thm:IntroMain} holds for $\pf_p^*(G)$ when $p\neq 1,2,\infty$. Solutions are obtained when $G$ either has the rapid decay property or is a Kunze-Stein group.

\section{Spectral properties of Banach $*$-algebras}


\subsection{Definitions and basic results}\label{subsec:basic}
We begin by recalling some background on Banach $*$-algebras. The reader should see \cite{Pal1} and \cite{Pal2} for a more comprehensive treatment of the topic.

If $X$ is any Banach space, $B(X)$ will denote the space of bounded linear operators on $X$. Let $\fA$ be a Banach $*$-algebra. The {\it reducing ideal} of $\fA$ is
$$\fA_\fR:=\bigcap \{\ker \pi \mid \Hc\text{ is a Hilbert space and }\pi:\fA\to B(\Hc)\ \text{is a $*$-representation}\}.$$
Note that no assumptions of continuity are imposed on $\pi$ in this definition since $*$-representations of Banach $*$-algebras are automatically contractive.
The reducing ideal $\fA_\fR$ is a closed $*$-ideal of $\fA$ that is known to contain the Jacobson radical $\fA_J$ of $\fA$.
The Banach $*$-algebra $\fA$ is {\it $*$-semisimple} if $\fA_\fR=\{0\}$.
Note that $\fA/\fA_\fR$ is always $*$-semisimple since $*$-representations of $\fA$ descend to $*$-representations of $\fA/\fA_\fR$.

Suppose $\fA$ is a $*$-semisimple Banach $*$-algebra. The {\it enveloping C$^*$-algebra} of $\fA$, denoted by $C^*(\fA)$, is the unique C*-algebra $\fB$ which admits the following universal property: there exists an injective $*$-homomorphism $\pi_u:\fA\to \fB$ with dense range so that for every $*$-representation $\pi: \fA\to B(\Hc)$, there exists a $*$-representation $\tilde{\pi}:\fB \to B(\Hc)$ so that $\pi=\tilde{\pi}\circ \pi_u$.



Let $\fA$ be a commutative Banach algebra. We write $\Oma$ to denote the {\it spectrum} of $\fA$, i.e. the set of all nonzero multiplicative linear functionals on $\fA$. The spectrum $\Oma$ is a locally compact Hausdorff topological space when equipped with the $w^*$-topology induced from $\fA^*$. The {\it Gelfand representation} of $\fA$ is given by
\begin{equation*}
  \Fa: \fA \to C_0(\Oma) \ \ ,\ \Fa(a)(\varphi)=\varphi(a) \ \ (a\in \fA, \varphi \in \Oma).
\end{equation*}
The kernel of $\Fa$ is exactly $\fA_J$, the Jacobson radical of $\fA$. In particular, the Banach algebra $\fA$ is semisimple if and only if its Gelfand representation is injective.
Now suppose $\fA$ is also a Banach $*$-algebra.
Since the image ${\rm Im}\,\Fa$ of $\Fa$ is a subalgebra of $C_0(\Oma)$ that separates points of $\Oma$ and vanishes nowhere on $\Oma$,
${\rm Im}\,\Fa$ is dense in $C_0(\Oma)$ whenever $\Fa$ is a $*$-homomorphism by the Stone-Weierstrass theorem. So when is $\Fa$ is $*$-homomorphism? This occurs exactly when $\fA$ is Hermitian which follows from the fact that $\Fa$ is an algebra homomorphism, every $a\in \fA$ can be written in the form $a=a_1+ia_2$ for $a_1,a_2\in \fA_h$, and
\begin{equation}\label{Eq:specrum vs image of Gelfand rep}
  \Sp_\fA(a)\setminus \{0\}=\{\varphi(a): \varphi \in \Oma \}\setminus \{0\}\ \ \ (a\in \fA).
\end{equation}
Finally note Equation \eqref{Eq:specrum vs image of Gelfand rep} immediately implies
\begin{equation}\label{Eq:spec. radius vs infinity norm Gelfand rep}
  r_\fA(a)=\|\Fa(a)\|_{C(\Oma)}=\sup \{|\varphi(a)|: \varphi \in \Oma \}
\end{equation}
for every $a\in\fA$.

\subsection{Invariant spectral radius and spectral subalgebras}

\begin{defn}\label{D:nested pair Ban alg}
  We say that $\fA\subseteq \fB$ is a {\it nested pair} of Banach $*$-algebras if $\fA$ and $\fB$ are Banach $*$-algebras and $\fA$ embeds continuously into $\fB$ as a dense $*$-subalgebra. If, in addition, $\fB$ (and hence $\fA$) is $*$-semisimple, we say $\fA\subseteq \fB$ is a {\it nested pair of $*$-semisimple} Banach $*$-algebras. A {\it nested triple of $(*$-semisimple$)$} Banach $*$-algebras is defined similarly.
\end{defn}

Inspired by \cite[Definition 2.5.1 and Proposition 2.5.2]{Pal1}, we give the following definitions.

\begin{defn}\label{D:Inv spec radius-Spectral subalg}
  Let $\fA\subseteq \fB$ be a nested pair of Banach $*$-algebras and $\fS$ a (not necessarily closed) $*$-subalgebra of $\fA$. We say $\fS_h$ has {\it invariant spectral radius} in $(\fA,\fB)$ if
  \begin{equation*}
    r_\fA(a)=r_\fB(a).
  \end{equation*}
  for every $a\in \fS_h$.
  If $\fA_h$ has invariant spectral radius in $(\fA,\fB)$, we simply say that $\fA_h$ has {\it invariant spectral radius} in $\fB$.
  Similarly, $\fS$ is a {\it spectral subalgebra} of $(\fA,\fB)$ if
  \begin{equation*}
    \Sp_\fA(a)\cup \{0\}=\Sp_\fB(a)\cup \{0\}.
  \end{equation*}
  for every $a\in \fS$.
  We simply say $\fA$ is a {\it spectral subalgebra} of $\fB$ when $\fA$ is a spectral subalgebra of $(\fA,\fB)$.
\end{defn}

  Clearly if $\fS$ is a spectral subalgebra of $(\fA,\fB)$, then $\fS_h$ has an  invariant spectral radius in $(\fA,\fB)$. The following theorem of Barnes provides a partial converse to this statement. Indeed, suppose $\fA\subseteq\fB$ is a nested pair of Banach $*$-algebras where $\fB$ is a C*-algebra, and $\fS$ is a $*$-subalgebra of $\fA$. The Barnes-Hulanicki Theorem implies $\fS$ is a spectral subalgebra of $(\fA,\fB)$ whenever $\fS_h$ has invariant spectral radius in $(\fA,\fB)$.

\begin{thm}[{\bf Barnes-Hulanicki Theorem} \cite{Branes}]\label{T:Barnes Theorem}
Let $\fA$ be a Banach $*$-algebra, $\fS$ a $*$-subalgebra of $\fA$, and 
$\pi : \fA\to B(\Hc)$ a faithful $*$-representation. If $\fA$ is unital, we assume that $\pi(1_\fA)=id_{B(H)}$.
If
$$r_\fA(a)=\|\pi(a)\|$$
for all $a\in \fS_h$,
then
$$\Sp_\fA(a)=\Sp_{B(\Hc)}(\pi(a))$$
for every $a\in \fS$.
\end{thm}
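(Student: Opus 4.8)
The plan is to prove the two inclusions separately, the substance lying entirely in the reverse one. Passing to unitizations (using the unit of $\fA$ itself when $\fA$ is already unital, as permitted by the hypothesis $\pi(1_\fA)=\mathrm{id}$), the map $\pi$ extends to a unital $*$-homomorphism $\pi^+\from\fA^+\to B(\Hc)$. Hence if $a-\lambda$ is invertible in $\fA^+$ then $\pi(a)-\lambda=\pi^+(a-\lambda)$ is invertible in $B(\Hc)$, giving $\Sp_{B(\Hc)}(\pi(a))\subseteq\Sp_\fA(a)\cup\{0\}$ for free. Everything therefore reduces to the converse: $\pi(a)-\lambda$ invertible implies $a-\lambda$ invertible in $\fA^+$, for $a\in\fS$ and $\lambda\in\C$.

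First I would reduce this to Hermitian elements of $\fS$. If $\pi(a)-\lambda$ is invertible, so are the positive operators $(\pi(a)-\lambda)^*(\pi(a)-\lambda)$ and $(\pi(a)-\lambda)(\pi(a)-\lambda)^*$. These are $\pi^+$ of $h=(a-\lambda)^*(a-\lambda)$ and $h'=(a-\lambda)(a-\lambda)^*$, and a direct expansion gives $h=c+|\lambda|^2$ and $h'=c'+|\lambda|^2$, where $c=a^*a-\bar\lambda a-\lambda a^*$ and $c'=aa^*-\bar\lambda a-\lambda a^*$ both lie in $\fS_h$. Since invertibility of $h$ and $h'$ in $\fA^+$ furnishes a left and a right inverse for $a-\lambda$, it suffices to show: for $c\in\fS_h$ and $\mu=|\lambda|^2>0$ with $-\mu\notin\Sp_{B(\Hc)}(\pi(c))$, one has $-\mu\notin\Sp_{\fA^+}(c)$. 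This follows from the \emph{key claim} that $\Sp_{\fA^+}(c)\subseteq\Sp_{B(\Hc)}(\pi(c))\cup\{0\}$ for every $c\in\fS_h$.

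The heart of the proof is this single-Hermitian-element comparison, and it is where I expect the main difficulty. Write $L=\Sp_{\fA^+}(c)$ and $K=\Sp_{B(\Hc)}(\pi(c))\subseteq\R$, so that $K\subseteq L$ automatically. For any real polynomial $p$ with $p(0)=0$ we have $p(c)\in\fS_h$, so the hypothesis gives $r_\fA(p(c))=\|\pi(p(c))\|$. By the spectral mapping theorem applied in the commutative subalgebra of $\fA^+$ generated by $c$, the left side equals $\max_{\nu\in L}|p(\nu)|$, while the right side, since $p(\pi(c))$ is self-adjoint, equals $\max_{t\in K}|p(t)|$. Thus $\max_L|p|=\max_K|p|$ for every real $p$ with $p(0)=0$. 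Were there a point $w\in L\setminus(K\cup\{0\})$, I would produce a real polynomial $p$ with $p(0)=0$ and $|p(w)|>\max_K|p|$, contradicting this identity: for real $w$, approximate (Weierstrass) a function vanishing on the compact set $K\cup\{0\}$ and equal to $1$ at $w$, then subtract its value at $0$ to enforce $p(0)=0$; for non-real $w$, which lies off the real interval $[-r,r]$ containing $K$ with $r=r_\fA(c)$, use Chebyshev-type polynomials for that interval, normalized to vanish at $0$, whose modulus at $w$ grows without bound while staying bounded on $K$. This forces $L\subseteq K\cup\{0\}$ and proves the key claim.

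Assembling the pieces, the key claim gives $\Sp_{\fA^+}(c)=\Sp_{B(\Hc)}(\pi(c))\cup\{0\}$, so for $\mu>0$ the assumption $-\mu\notin\Sp_{B(\Hc)}(\pi(c))$ indeed forces $-\mu\notin\Sp_{\fA^+}(c)$; hence $h$ and $h'$ are invertible and $a-\lambda$ is invertible in $\fA^+$. This yields $\Sp_\fA(a)\cup\{0\}\subseteq\Sp_{B(\Hc)}(\pi(a))\cup\{0\}$, which with the automatic inclusion gives the stated equality (the two spectra agreeing exactly once the normalization $\pi(1_\fA)=\mathrm{id}$ in the unital case removes the spurious $0$). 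The main obstacle throughout is the single-Hermitian comparison, and specifically that $\fS$ need not contain a unit, so only polynomials vanishing at $0$ are available; the device of subtracting the value at $0$ (equivalently, an $n$-th root argument showing the constant term is immaterial to the spectral-radius identity) is what overcomes it.
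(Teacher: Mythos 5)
Your core argument is correct and is essentially the classical Hulanicki--Barnes mechanism (the paper itself gives no proof, citing Barnes): the reduction of $a-\lambda$ to the two Hermitian elements $c,c'\in\fS_h$ via $h=(a-\lambda)^*(a-\lambda)$ and $h'=(a-\lambda)(a-\lambda)^*$ is sound, and your key claim is proved correctly --- the identity $\max_{L}|p|=\max_{K}|p|$ for real polynomials with $p(0)=0$ (polynomial spectral mapping is available in $\fA^+$ itself; you do not even need the commutative subalgebra), Weierstrass approximation for a real $w\in L\setminus(K\cup\{0\})$, and odd Chebyshev polynomials $T_n(x/r)$, which automatically vanish at $0$ and blow up at any $w\notin[-r,r]$, for non-real $w$. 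But note what this yields: since you must take $\mu=|\lambda|^2>0$, and since polynomials vanishing at $0$ are blind to whether $0$ lies in a spectrum, your argument proves exactly $\Sp_\fA(a)\cup\{0\}=\Sp_{B(\Hc)}(\pi(a))\cup\{0\}$, not the stated equality.

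The genuine gap is your closing parenthetical that the normalization $\pi(1_\fA)=\mathrm{id}$ ``removes the spurious $0$.'' It does not, and the tell is that your proof never uses faithfulness of $\pi$, which is provably indispensable at $\lambda=0$: take $\fA=\C^2$ with unit $(1,1)$, $\fS=\C\times\{0\}$, $\Hc=\C$, $\pi(x,y)=x$. Then $\pi(1_\fA)=1$, and $r_\fA\bigl((s,0)\bigr)=|s|=\|\pi(s,0)\|$ for every Hermitian $(s,0)\in\fS_h$, so every step of your argument applies verbatim, yet $\Sp_\fA\bigl((1,0)\bigr)=\{0,1\}\neq\{1\}=\Sp_{B(\Hc)}\bigl(\pi(1,0)\bigr)$; the only missing hypothesis is faithfulness. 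To close the gap: if $\pi(a)$ is invertible, your key claim gives $\Sp_{\fA^+}(a^*a)\subseteq\{0\}\cup[\delta,M]$ with $\delta>0$, so if $0$ lies in this spectrum it is isolated and the Riesz idempotent $f\in\fA^+$ for the part $\{0\}$ is nonzero, while $\pi^+(f)=0$ since the contour around $0$ encloses no spectrum of $\pi(a^*a)$. Writing $f=y+\beta 1$ with $y\in\fA$: if $\beta=0$ (in particular in the unital case, where $f\in\fA$), faithfulness forces $f=0$, a contradiction; if $\beta\neq 0$, then $u=-y/\beta$ satisfies $\pi(u)=\mathrm{id}$, and faithfulness ($\pi(ux-x)=0$, etc.) makes $u$ a unit for $\fA$, contradicting non-unitality. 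This settles $\lambda=0$ in both cases --- in the non-unital case showing $\pi(a)$ is never invertible, matching the automatic $0\in\Sp_\fA(a)$ --- and completes your proof.
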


The above theorem of Barnes slightly generalizes a well known and frequently used result of Hulanicki (see \cite[Proposition 3.5]{Hul3}). A further generalization can be found in \cite[Lemma 3.1]{FGL}.

The conditions of invariant spectral radius and spectral subalgebra are satisfied in many examples of nested Banach $*$-algebras. We pause to note one important class of examples.

\begin{exm}For a nested pair $\fA\subseteq \fB$ of Banach $*$-algebras, we say $\fA$ is a {\it differential subalgebra} of $\fB$ if there exists $K>0$ and $0\leq \theta <1$ such that
$$\|a^2\|_\fA\leq K\|a\|^{1+\theta}_\fA\|a\|^{1-\theta}_\fB \ \ \ (a\in \fA).$$
It is well known that if $\fA$ is a differential subalgebra of $\fB$, then $\fA_h$ has invariant spectral radius in $\fB$. If we additionally assume that $\fB$ is Hermitian, then $\fA$ is automatically a spectral subalgebra of $\fB$ so that it is also Hermitian (see \cite[Lemma 3.2]{GK1}). Differential subalgebras naturally occur in many contexts. For example, differential subalgebras of C*-algebras arise in time-frequency analysis and in noncommutative geometry (e.g., see \cite{GK1}, \cite{black-cuntz} and \cite{rennie}). Further, this notion has applications to K-theory since spectral subalgebras induce isomorphism in the K-theory (e.g., see \cite{Jol} and \cite{Laff}).
\end{exm}

The following provides an explicit example of a differential subalgebra that we will return to later in the paper.

\begin{exm}[Pytlik]\label{Ex:Pytlik1}
	Let $G$ be a locally compact group. A (submultiplicative) symmetric weight $\om:G\to [1,\infty)$ is {\it weakly additive} if there is $C>0$ such that
	$$\om(st)\leq C(\om(s)+\om(t))$$
	for all $s,t\in G$. Fix a symmetric weakly additive weight $\omega$ on $G$. Pytlik shows in \cite[Lemma 1 and Lemma 2]{Pytlik} that the Beurling algebra $L^1(G,\om)$ is a differential subalgebra of $L^1(G)$.
\end{exm}

%

We finish this subsection with a result that demonstrates one of many useful properties related to an invariant spectral radius.

\begin{prop}\label{P:dense Spec subalg-same C envelope}
Let $\fA\subseteq \fB$ be a nested pair of $*$-semisimple Banach $*$-algebras, and $\fS$ be a dense $*$-subalgebra of $\fA$. Suppose $\fS_h$ has an invariant spectral radius in $(\fA,\fB)$. Then $\fA$ and $\fB$ have the same $C^*$-envelope. In particular, if $\fB$ is a C$^*$-algebra, then $\fB$ is the C$^*$-envelope of $\fA$.
\end{prop}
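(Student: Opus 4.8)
The plan is to show that the inclusion $\fA\hookrightarrow \fB$ induces an isometric isomorphism of enveloping C$^*$-algebras, which amounts to proving that the greatest C$^*$-seminorms of $\fA$ and $\fB$ agree on $\fA$. Write $\gamma_\fA(a)=\sup\{\|\pi(a)\|:\pi\text{ a }*\text{-representation of }\fA\}$ and likewise $\gamma_\fB$; since $\fA,\fB$ are $*$-semisimple these are in fact norms, each satisfies the C$^*$-identity $\gamma_\fA(a)^2=\gamma_\fA(a^*a)$ (the supremum commutes with squaring), and $C^*(\fA),C^*(\fB)$ are the completions of $\fA,\fB$ in $\gamma_\fA,\gamma_\fB$. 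One inequality is free: restricting a $*$-representation of $\fB$ to $\fA$ shows that $\gamma_\fB|_\fA$ is a C$^*$-seminorm on $\fA$, so $\gamma_\fB(a)\le \gamma_\fA(a)$ for all $a\in\fA$. The entire content lies in the reverse inequality.

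The key step, and the only place the invariant spectral radius hypothesis is used, is the bound $\gamma_\fA(a)\le \|a\|_\fB$ for every $a\in\fS$. For self-adjoint $h\in\fS_h$ any $*$-representation $\pi$ of $\fA$ sends $h$ to a self-adjoint operator, so $\|\pi(h)\|=r_{B(\Hc)}(\pi(h))\le r_\fA(h)$ because a homomorphism cannot enlarge the spectrum; taking the supremum over $\pi$ gives $\gamma_\fA(h)\le r_\fA(h)$. The hypothesis now supplies $r_\fA(h)=r_\fB(h)$, and the spectral radius is dominated by the norm, so $\gamma_\fA(h)\le \|h\|_\fB$. For general $a\in\fS$, using that $\fS$ is a $*$-subalgebra so $a^*a\in\fS_h$, the C$^*$-identity for $\gamma_\fA$ and the isometry of the involution on $\fB$ yield $\gamma_\fA(a)^2=\gamma_\fA(a^*a)\le \|a^*a\|_\fB\le \|a\|_\fB^2$. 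Hence $\gamma_\fA\le\|\cdot\|_\fB$ on $\fS$.

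With this bound in hand I would extend $\gamma_\fA$ from $\fS$ to all of $\fB$. Since $\fS$ is $\|\cdot\|_\fB$-dense in $\fB$ and $\gamma_\fA$ is $\|\cdot\|_\fB$-continuous on $\fS$, it extends uniquely to a continuous seminorm $\bar\gamma$ on $\fB$, and the identities $\bar\gamma(b^*)=\bar\gamma(b)$, $\bar\gamma(bc)\le\bar\gamma(b)\bar\gamma(c)$, $\bar\gamma(b^*b)=\bar\gamma(b)^2$ pass to the extension by continuity of multiplication and involution; thus $\bar\gamma$ is a C$^*$-seminorm on $\fB$. Being continuous, $\bar\gamma$ arises from a $*$-representation of $\fB$, namely the composition of the quotient map $\fB\to\fB/\ker\bar\gamma$ followed by a faithful representation of the C$^*$-completion of $(\fB/\ker\bar\gamma,\bar\gamma)$; consequently $\bar\gamma\le\gamma_\fB$. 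Restricting to $\fS$ gives $\gamma_\fA\le\gamma_\fB$ there, and together with the free inequality we obtain $\gamma_\fA=\gamma_\fB$ on $\fS$. As both seminorms are $\le\|\cdot\|_\fA$ and $\fS$ is dense in $\fA$, this upgrades to $\gamma_\fA=\gamma_\fB$ on all of $\fA$.

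Finally, since $\gamma_\fA=\gamma_\fB|_\fA$ and $\fA$ is $\gamma_\fB$-dense in $\fB$ (using $\|\cdot\|_\fB$-density and the continuity of $\gamma_\fB$), the inclusion induces an isometric $*$-homomorphism $C^*(\fA)\to C^*(\fB)$ with dense range, hence an isomorphism; this is precisely the statement that $\fA$ and $\fB$ have the same C$^*$-envelope. When $\fB$ is already a C$^*$-algebra one has $C^*(\fB)=\fB$, giving the final sentence. The main obstacle is exactly the reverse inequality $\gamma_\fA\le\gamma_\fB$ on $\fA$: a $*$-representation of $\fA$ need not extend to $\fB$, so this cannot be read off directly. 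The device that circumvents it is to exploit the spectral-radius equality \emph{only} to gain $\|\cdot\|_\fB$-continuity of $\gamma_\fA$ (after reducing to self-adjoint elements via the C$^*$-identity), and then let the maximality of the greatest C$^*$-seminorm of $\fB$ finish the argument.
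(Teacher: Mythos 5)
Your proposal is correct and is essentially the paper's own argument: the crucial step in both is the chain $\gamma_\fA(a)^2=\gamma_\fA(a^*a)\leq r_\fA(a^*a)=r_\fB(a^*a)\leq\|a\|_\fB^2$ on $\fS$, followed by density and the universal property of $C^*(\fB)$. The only difference is packaging --- you compare the greatest C$^*$-seminorms directly, while the paper extends $\pi_u$ to a $*$-homomorphism $\fB\to C^*(\fA)$ and checks that the composite $C^*(\fA)\to C^*(\fB)\to C^*(\fA)$ is the identity --- which is the same mechanism in different notation.
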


\begin{proof}
Let $C^*(\fA)$ and $C^*(\fB)$ be the enveloping $C^*$-algebras of $\fA$ and $\fB$, respectively. Since any bounded $*$-representation of $\fB$ restricts to a $*$-representation of $\fA$ and also $\fA$ is dense in $\fB$, the inclusion $\fA\subseteq \fB$ extends to a surjective $*$-homomorphism $L_1: C^*(\fA) \to C^*(\fB)$.
Let $\pi_u: \fA\to C^*(\fA)$ be the canonical inclusion of $\fA$ inside $C^*(\fA)$. Then, for every $a\in \fS$,
\begin{eqnarray*}
\|\pi_u(a)\|^2 &=& \|\pi_u(a^*a)\| \\
&=& r_{C^*(\fA)}(\pi_u(a^*a)) \\
&\leq &  r_{\fA}(a^*a) \\
&=&  r_{\fB}(a^*a) \\
&\leq & \|a^*a\|_\fB \\
&\leq & \|a\|_\fB^2.
\end{eqnarray*}
Thus $\pi_u$ extends to a $*$-homomorphism from $\fB$ into $C^*(\fA)$. By the universal property of $C^*(\fB)$, the identity map on $\fA$ extends to surjective $*$-homomorphism $L_2:C^*(\fB)\to C^*(\fA)$. Set
$$L:C^*(\fA) \to C^*(\fA) \ \ \ ,\ \ \  L:=L_2\circ L_1.$$
Clearly $L$ is a surjective $*$-homomorphism on $C^*(\fA)$. Moreover, since $L(\pi_u(a))=\pi_u(a)$ for every $a\in \fA$ and $\pi_u(\fA)$ is dense in $C^*(\fA)$, it follows that $L$ is the identity map on $C^*(\fA)$. In particular, $L_1$ is injective so that it is an isometric $*$-isomorphism from $C^*(\fA)$ onto $C^*(\fB)$.
\end{proof}

\subsection{Hermitian and quasi-Hermitian Banach $*$-algebra}

We begin by noting some conditions which are equivalent to a Banach $*$-algebra being Hermitian.


\begin{lem}\label{T:Hermition Ban alg-equivalent conditions}
  The following are equivalent for a Banach $*$-algebra $\fA$.\\
  $(i)$ $\fA$ is Hermitian;\\
  $(ii)$ $\fA$ is symmetric, i.e. $\Sp_\fA(a^*a)\subseteq [0,\infty)$ for every $a\in \fA$.

  If $\fA$ is $*$-semisimple, then $(i)$ and $(ii)$ are equivalent to each of the following.\\
  $(iii)$ $\fA$ is a spectral $*$-subalgebra of $C^*(\fA)$;\\
  $(iv)$ $r_{\fA}(a)=r_{C^*(\fA)}(a)$ for every $a\in \fA$.

    If $\fA$ is commutative, then conditions $(i)$ and $(ii)$ are equivalent to the following.\\
   $(v)$ the Gelfand representation $\Fa$ is a $*$-homomorphism. 

  If $\fA$ is commutative and $*$-semisimple, then conditions $(i)$ through $(v)$ are equivalent to the following.\\
	$(vi)$ Let $\pi_u$ be the universal embedding of $\fA$ into $C^*(\fA)$. The Gelfand transform $\Fa : \fA\to C_0(\Oma)$ extends to an isometric $*$-isomorphism $\tilde{\Fa}:C^*(\fA)\to C_0(\Oma)$ such that $\Fa=\tilde{\Fa}\circ \pi_u$.
\end{lem}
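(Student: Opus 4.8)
The plan is to establish the two basic equivalences $(i)\Leftrightarrow(ii)$ and $(i)\Leftrightarrow(v)$ directly, and then to close the remaining conditions into a cycle under the stated extra hypotheses. For $(i)\Leftrightarrow(ii)$ I would dispatch the easy implication $(ii)\Rightarrow(i)$ by hand: if $a=a^*$ then $a^*a=a^2$, so symmetry gives $\Sp_\fA(a^2)\subseteq[0,\infty)$, and the spectral mapping theorem $\Sp_\fA(a^2)=\{\lambda^2:\lambda\in\Sp_\fA(a)\}$ forces every $\lambda\in\Sp_\fA(a)$ to be real. The reverse implication $(i)\Rightarrow(ii)$ is the Shirali--Ford theorem, which I would cite from \cite{Pal2}. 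For the commutative equivalence $(i)\Leftrightarrow(v)$ I would argue as in the discussion preceding the lemma: writing $a=a_1+ia_2$ with $a_1,a_2\in\fA_h$, the map $\Fa$ is a $*$-homomorphism exactly when $\varphi(a_j)\in\R$ for all $\varphi\in\Oma$, which by \eqref{Eq:specrum vs image of Gelfand rep} is equivalent to $\Sp_\fA(a_j)\subseteq\R$, i.e. to $\fA$ being Hermitian.

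Assume now $\fA$ is $*$-semisimple, so that $C^*(\fA)$ and the universal embedding $\pi_u\colon\fA\to C^*(\fA)$ are defined, and I would run the cycle $(ii)\Rightarrow(iii)\Rightarrow(iv)\Rightarrow(i)$. The implication $(iii)\Rightarrow(iv)$ is immediate, since a spectral subalgebra has, a fortiori, equal spectral radii. For $(iv)\Rightarrow(i)$ I would introduce the Pták function $\nu(a):=r_\fA(a^*a)^{1/2}$ and show that $(iv)$ forces $r_\fA(a)\le\nu(a)$ for \emph{every} $a\in\fA$: indeed $C^*(\fA)$-positivity of $\pi_u(a^*a)$ gives $\|\pi_u(a)\|_{C^*(\fA)}^2=r_{C^*(\fA)}(\pi_u(a^*a))$, which equals $r_\fA(a^*a)=\nu(a)^2$ by $(iv)$, whence $r_\fA(a)=r_{C^*(\fA)}(\pi_u(a))\le\|\pi_u(a)\|_{C^*(\fA)}=\nu(a)$. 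By Pták's theorem (again \cite{Pal2}) the inequality $r_\fA\le\nu$ on all of $\fA$ characterises symmetry, giving $(ii)$ and hence $(i)$.

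The heart of the matter is $(ii)\Rightarrow(iii)$, which I expect to be the main obstacle. Here I would feed the Barnes--Hulanicki Theorem (Theorem \ref{T:Barnes Theorem}) with the faithful $*$-representation obtained by composing $\pi_u$ with a faithful representation of the $C^*$-algebra $C^*(\fA)$ on a Hilbert space; since that representation is isometric, the hypothesis of Theorem \ref{T:Barnes Theorem} reduces to proving $r_\fA(a)=\|\pi_u(a)\|_{C^*(\fA)}$ for all $a\in\fA_h$. The inequality $\le$ is easy: for Hermitian $a$ every $*$-representation $\pi$ sends $a$ to a self-adjoint operator, so $\|\pi(a)\|=r_{B(\Hc)}(\pi(a))\le r_\fA(a)$ by the spectral inclusion for $*$-homomorphisms, and taking the supremum over $\pi$ gives $\|\pi_u(a)\|_{C^*(\fA)}\le r_\fA(a)$. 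The reverse inequality is the delicate point: using symmetry (from $(ii)$) and Pták's theorem, $\nu$ is a $C^*$-seminorm and is therefore dominated by the greatest $C^*$-seminorm $\|\pi_u(\cdot)\|_{C^*(\fA)}$; since $\nu(a)=r_\fA(a^*a)^{1/2}=r_\fA(a^2)^{1/2}=r_\fA(a)$ for Hermitian $a$, this yields $r_\fA(a)\le\|\pi_u(a)\|_{C^*(\fA)}$. With the equality on $\fA_h$ in hand, Theorem \ref{T:Barnes Theorem} delivers $\Sp_\fA(a)=\Sp_{C^*(\fA)}(\pi_u(a))$ for all $a\in\fA$ (a faithful representation of a $C^*$-algebra preserves spectra), which is $(iii)$.

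Finally, for the commutative $*$-semisimple case I would derive $(vi)$ from the conditions already proved. Since $\fA$ is commutative, $C^*(\fA)$ is commutative and every element is normal, so $(iv)$ reads $r_\fA(a)=\|\pi_u(a)\|_{C^*(\fA)}$. As $\Fa\colon\fA\to C_0(\Oma)$ is a $*$-representation by $(v)$, the universal property of $C^*(\fA)$ factors it as $\Fa=\tilde{\Fa}\circ\pi_u$ for a $*$-homomorphism $\tilde{\Fa}\colon C^*(\fA)\to C_0(\Oma)$; combining $r_\fA(a)=\|\pi_u(a)\|_{C^*(\fA)}$ with the Gelfand radius formula \eqref{Eq:spec. radius vs infinity norm Gelfand rep} shows $\tilde{\Fa}$ is isometric on the dense subalgebra $\pi_u(\fA)$, hence isometric, while its range is a closed subalgebra containing the dense set $\im\Fa$, hence all of $C_0(\Oma)$. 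Thus $\tilde{\Fa}$ is an isometric $*$-isomorphism, giving $(vi)$; the converse $(vi)\Rightarrow(v)$ is immediate since $\Fa=\tilde{\Fa}\circ\pi_u$ is then a composite of $*$-homomorphisms.
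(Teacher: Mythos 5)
Your proposal is correct, but it takes a more self-contained route than the paper, whose proof is essentially citation-based: the paper obtains $(i)\Leftrightarrow(ii)$ (Shirali--Ford) \emph{and} $(i)\Leftrightarrow(iii)$ directly from \cite[Theorem 11.4.1]{Pal2}, notes $(iii)\Rightarrow(iv)$ is clear and gets $(iv)\Rightarrow(iii)$ from the Barnes--Hulanicki Theorem (Theorem \ref{T:Barnes Theorem}), and then disposes of $(v)\Leftrightarrow(vi)$ by combining Equation \eqref{Eq:specrum vs image of Gelfand rep} with Proposition \ref{P:dense Spec subalg-same C envelope}. You instead unpack the Palmer citation: your $(ii)\Rightarrow(iii)$ verifies the Barnes--Hulanicki hypothesis $r_\fA(a)=\|\pi_u(a)\|_{C^*(\fA)}$ on $\fA_h$ by showing, via Pt\'{a}k's theory, that under symmetry $\nu(a)=r_\fA(a^*a)^{1/2}$ is a C$^*$-seminorm dominated by the Gelfand--Naimark seminorm, with the reverse inequality from spectral inclusion for $*$-homomorphisms; and your $(iv)\Rightarrow(i)$ runs through $r_\fA\le\nu$ and Pt\'{a}k's characterization of symmetry rather than back through $(iii)$. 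Both routes rest on the same deep inputs (Pt\'{a}k/Shirali--Ford symmetry theory and Barnes--Hulanicki), so the difference is organizational: the paper is shorter by deferring to \cite{Pal2}, while your argument makes the mechanism visible, at the cost of invoking Pt\'{a}k's theorem twice as a black box. Your hand-built $(vi)$ --- isometry of $\tilde{\Fa}$ on the dense image $\pi_u(\fA)$ from $r_\fA(a)=\|\pi_u(a)\|_{C^*(\fA)}$ (valid since every element of the commutative algebra $C^*(\fA)$ is normal) plus surjectivity from Stone--Weierstrass density of ${\rm Im}\,\Fa$ --- replaces the paper's appeal to Proposition \ref{P:dense Spec subalg-same C envelope} and is fine. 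Two routine glosses you should acknowledge: the paper states the universal property of $C^*(\fA)$ only for representations on Hilbert spaces, so factoring $\Fa=\tilde{\Fa}\circ\pi_u$ through $C_0(\Oma)$ requires composing with a faithful representation of $C_0(\Oma)$ and checking that the resulting range lands back in (a copy of) $C_0(\Oma)$; and when $\fA$ is unital, the faithful representation of $C^*(\fA)$ used to feed Theorem \ref{T:Barnes Theorem} should be taken nondegenerate so that the hypothesis $\pi(1_\fA)={\rm id}$ is met.
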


\begin{proof}
  The equivalence of $(i)$ and $(ii)$ is the well known Shirali-Ford Theorem (see \cite[Theorem 11.4.1]{Pal2}).

  Suppose $\fA$ is $*$-semisimple. The equivalence of $(i)$ with $(iii)$ is proven in \cite[Theorem 11.4.1]{Pal2}.
  The implication $(iii)$ $\Longrightarrow$ $(iv)$ is clear. The reverse implication follows from the Barnes-Hulanicki Theorem (see Theorem \ref{T:Barnes Theorem}).

 Now suppose $\fA$ is commutative. The equivalence of $(i)$ and $(v)$ is well known and has already been observed in Subsection \ref{subsec:basic}. Suppose $\fA$ is also $*$-semisimple. The equivalence of $(v)$ and $(vi)$ is easily deduced from Equation \eqref{Eq:specrum vs image of Gelfand rep} and Proposition \ref{P:dense Spec subalg-same C envelope}.
\end{proof}

We now introduce the notion of a quasi-Hermitian $*$-subalgebra of a Banach $*$-algebra, which generalizes that of a Hermitian Banach $*$-algebra.

\begin{defn}\label{D:quasi-Hermitian Ban alg}A dense $*$-subalgebra $\fS$ of a Banach $*$-algebra $\fA$ is {\it quasi-Hermitian} in $\fA$ if $\Sp_\fA(a)\subseteq \R$ for every $a\in \fS_h$.
\end{defn}

\noindent Definition \ref{D:qH-group} can be rephrased as stating that a locally compact group $G$ is quasi-Hermitian if and only if $C_c(G)$ is quasi-Hermitian in $L^1(G)$.

Let $\fS$ be a dense $*$-subalgebra of a Banach $*$-algebra. The property of $\fS$ being quasi-Hermitian is ``relative'' in the sense that it depends upon which Banach $*$-algebra $\fS$ is viewed as being contained in. For example, let $\fA$ be any non-Hermitian $*$-semisimple Banach $*$-algebra. Then $\fA$ is a quasi-Hermitian $*$-subalgebra of $C^*(\fA)$, but $\fA$ is not a quasi-Hermitian $*$-subalgebra of itself.

Suppose $\fS$ is a dense $*$-subalgebra of a commutative Banach $*$-algebra $\fA$. We show in the following proposition that $\fA$ is Hermitian if and only if $\fS$ is quasi-Hermitian. The proof is straightforward but the consequences are essential.

\begin{prop}\label{T:Hermitian vs quasi Hermitian-comm alg}
  The following are equivalent for a commutative Banach $*$-algebra $\fA$ with dense $*$-subalgebra $\fS$.\\
  $(i)$ $\fA$ is Hermitian;\\
  $(ii)$ $\fS$ is quasi-Hermitian in $\fA$;\\
  $(iii)$ for every $a\in \fS$, $\Sp_\fA(a^*a)\subseteq [0,\infty)$.\\
   \end{prop}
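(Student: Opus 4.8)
The plan is to prove the cyclic chain of implications $(i)\Rightarrow(ii)\Rightarrow(iii)\Rightarrow(i)$, exploiting the commutativity of $\fA$ throughout so that the Gelfand machinery of Subsection \ref{subsec:basic} is available. The implication $(i)\Rightarrow(ii)$ is immediate: if $\fA$ is Hermitian then $\Sp_\fA(a)\subseteq\R$ for \emph{every} $a\in\fA_h$, so in particular this holds for the Hermitian elements of the subalgebra $\fS$, which is exactly the statement that $\fS$ is quasi-Hermitian in $\fA$. (Note this direction does not even use commutativity.) The implication $(iii)\Rightarrow(i)$ is also essentially free via the Shirali-Ford Theorem recorded as the equivalence $(i)\Leftrightarrow(ii)$ in Lemma \ref{T:Hermition Ban alg-equivalent conditions}: condition $(iii)$ is the statement of symmetry restricted to $\fS$, so I must only upgrade ``$\Sp_\fA(a^*a)\subseteq[0,\infty)$ for $a\in\fS$'' to ``for all $a\in\fA$'', after which symmetry gives Hermitianness.

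The substantive content therefore lies in $(ii)\Rightarrow(iii)$ and in the density upgrade needed for $(iii)\Rightarrow(i)$, and both are handled by the same device: a continuity/density argument on the Gelfand spectrum $\Oma$. For $(ii)\Rightarrow(iii)$, fix $a\in\fS$ and use Equation \eqref{Eq:specrum vs image of Gelfand rep} to write $\Sp_\fA(a^*a)\setminus\{0\}=\{\varphi(a^*a):\varphi\in\Oma\}\setminus\{0\}$. Since $\fA$ is commutative, each $\varphi\in\Oma$ is multiplicative, but $\varphi$ need not respect the involution a priori. The trick is to consider the Hermitian element $b=a+a^*\in\fS_h$ and $c=i(a-a^*)\in\fS_h$ (noting $\fS$ is a $*$-subalgebra, so these lie in $\fS_h$); quasi-Hermitianness gives $\varphi(b),\varphi(c)\in\R$ for all $\varphi\in\Oma$. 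Writing $a=\tfrac12(b - i c)$ then yields $\varphi(a^*)=\overline{\varphi(a)}$ for every $\varphi\in\Oma$, i.e. quasi-Hermitianness of $\fS$ forces the characters to be $*$-preserving on $\fS$. Consequently $\varphi(a^*a)=\varphi(a^*)\varphi(a)=|\varphi(a)|^2\geq 0$, giving $\Sp_\fA(a^*a)\subseteq[0,\infty)$.

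For the density upgrade in $(iii)\Rightarrow(i)$, observe that the computation just performed shows $(iii)$ is equivalent to every $\varphi\in\Oma$ satisfying $\varphi(a^*)=\overline{\varphi(a)}$ for all $a$ in the \emph{dense} subalgebra $\fS$. Each of the maps $a\mapsto\varphi(a^*)$ and $a\mapsto\overline{\varphi(a)}$ is continuous on $\fA$ (the involution is isometric and $\varphi$ is bounded), so agreement on the dense set $\fS$ forces agreement on all of $\fA$. Hence $\varphi(a^*)=\overline{\varphi(a)}$ for every $a\in\fA$ and every $\varphi\in\Oma$, which is precisely the statement that the Gelfand representation $\Fa$ is a $*$-homomorphism, i.e. condition $(v)$ of Lemma \ref{T:Hermition Ban alg-equivalent conditions}; that lemma then gives $(i)$.

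The main obstacle, such as it is, is the realization that characters need not be automatically $*$-preserving, so the heart of the argument is the passage from real-valuedness of $\varphi$ on $\fS_h$ to the $*$-preserving identity $\varphi(a^*)=\overline{\varphi(a)}$, together with extending that identity from $\fS$ to $\fA$ by continuity and density. Everything else is bookkeeping with \eqref{Eq:specrum vs image of Gelfand rep} and an appeal to the already-stated equivalences in Lemma \ref{T:Hermition Ban alg-equivalent conditions}. I would present the proof compactly as the cycle $(i)\Rightarrow(ii)\Rightarrow(iii)\Rightarrow(v)\Rightarrow(i)$, routing the last two steps through Lemma \ref{T:Hermition Ban alg-equivalent conditions} rather than reproving Shirali-Ford.
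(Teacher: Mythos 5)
Your argument is correct in outline, but there is one unjustified step, and it sits exactly at the point you identify as the heart of the matter. In $(iii)\Rightarrow(i)$ you assert that ``the computation just performed shows $(iii)$ is \emph{equivalent} to every $\varphi\in\Oma$ satisfying $\varphi(a^*)=\overline{\varphi(a)}$ for all $a\in\fS$.'' The computation you performed establishes only one direction of this equivalence: it derives $*$-preservation of characters on $\fS$ from $(ii)$, i.e.\ from real-valuedness of each $\varphi$ on $\fS_h$, and then deduces $(iii)$ from $*$-preservation. Nothing you have written passes from $(iii)$ back to real-valuedness of characters on $\fS_h$, which is precisely the input your decomposition $a=\tfrac12(b-ic)$ requires. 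The gap is small and the fix is the paper's own $(iii)\Rightarrow(ii)$ step (the ``easy half'' of the Shirali--Ford argument): if $(iii)$ holds and $b\in\fS_h$, then for each $\varphi\in\Oma$ one has, using Equation \eqref{Eq:specrum vs image of Gelfand rep},
\begin{equation*}
\varphi(b)^2=\varphi(b^*b)\in\Sp_\fA(b^*b)\cup\{0\}\subseteq[0,\infty),
\end{equation*}
whence $\varphi(b)\in\R$; equivalently, at the spectral level, $\{\alpha^2:\alpha\in\Sp_\fA(b)\}\subseteq\Sp_\fA(b^2)=\Sp_\fA(b^*b)\subseteq[0,\infty)$ forces $\Sp_\fA(b)\subseteq\R$. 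With that one line inserted, your chain $(iii)\Rightarrow(ii)\Rightarrow$ ($*$-preserving characters on $\fS$) $\Rightarrow(v)\Rightarrow(i)$ closes correctly, since the density step (agreement of the continuous maps $a\mapsto\varphi(a^*)$ and $a\mapsto\overline{\varphi(a)}$ on the dense set $\fS$) and the appeal to Lemma \ref{T:Hermition Ban alg-equivalent conditions}$(v)$ are both sound.

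For comparison: the paper runs the cycle the other way, proving $(i)\Rightarrow(iii)$ by quoting the Shirali--Ford theorem (Lemma \ref{T:Hermition Ban alg-equivalent conditions}, equivalence of Hermitian and symmetric), $(iii)\Rightarrow(ii)$ by the squaring trick just described, and closing the loop with the same Gelfand--density argument you use, namely that $\Fa$ restricted to $\fS$ is a $*$-homomorphism if and only if $\fS$ is quasi-Hermitian, upgraded to all of $\fA$ by density and continuity of $\Fa$. Your reversed cycle has the mild advantage that it avoids the nontrivial direction of Shirali--Ford altogether ($(i)\Rightarrow(ii)$ is a triviality, whereas the paper's $(i)\Rightarrow(iii)$ rests on that theorem), and your explicit character-level derivation of $\varphi(a^*)=\overline{\varphi(a)}$ from real-valuedness on $\fS_h$ makes fully concrete the argument the paper only gestures at in Subsection \ref{subsec:basic}. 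So the defect is not one of approach but of bookkeeping: a claimed equivalence of which only one implication was proved.
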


   \begin{proof}
   The implication $(i)\Longrightarrow (iii)$ is provided by Lemma \ref{T:Hermition Ban alg-equivalent conditions}.

   The proof of $(iii)\Longrightarrow (ii)$ is identical to the ``easy part" of the Shirali-Ford Theorem. For the sake of self-containment, we include it here. Let $a\in\fS_h$. If $(iii)$ holds, then
   $$ \{\alpha^2: \alpha \in \Sp_{\fA}(a)\}\subseteq \Sp_{\fA}(a^2)=\Sp(a^*a)\subseteq [0,\infty).$$
   Hence $\Sp_{\fA}(a)\subseteq \R$ for every $a\in \fS_h$.

	We argued in Subsection \ref{subsec:basic} that the Gelfand transform is a $*$-homomorphism if and only if $\fA$ is Hermitian. The same argument shows the restriction of $\Fa$ to $\fS$ is a $*$-homomorphism if and only if $\fS$ is quasi-Hermitian in $\fA$. Thus, the implication $(ii)\Longrightarrow(iii)$ follows by density of $\fS$ in $\fA$ and continuity of $\Fa$.
   \end{proof}

We point out that the results of Proposition \ref{T:Hermitian vs quasi Hermitian-comm alg} may not hold when the assumption of commutativity is dropped. Namely, a non-Hermitian Banach $*$-algebra may contain a quasi-Hermitian dense $*$-subalgebra. As demonstrated by the following example due to Pytlik, such dense $*$-subalgebras may even be a Hermitian Banach $*$-algebras under a different norm.

\begin{exm}\label{E:Hermitian- weighted locally finite alg}
Let $G$ be a locally finite, countable, discrete group, i.e. a discrete group containing an increasing sequence  $\{G_i\}_{i\in \N}$ of finite subgroups such that $G=\bigcup_{i\in \N} G_i$.
Take an increasing sequence $\{n_i\}_{i\in \N}$ of natural numbers and define
$\om:G \to [1,\infty)$ by
$$\om=1+\sum_{i=1} n_i 1_{G_{i+1}\setminus G_{i}}.$$
It is easy to see that
$$\om(st)=\max\{ \om(s),\om(t) \}$$
for every $s,t\in G$.
In particular, $\om$ is a weakly additive symmetric weight on $G$. Choose $\{n_i\}$ so that $1/\om \in \ell^1(G)$. Then, as we will elaborate on in Remark \ref{R:Spec interpolation triple-Pytlik}, $\ell^1(G,\om)$ is Hermitian. In particular, $\ell^1(G,\omega)$ is quasi-Hermitian when viewed as a dense $*$-subalgebra of $\ell^1(G)$. Since there are examples of locally finite, countable, discrete groups that are not Hermitian (see \cite{Hul4}), we deduce that a dense quasi-Hermitian $*$-subalgebra of a non-Hermitian Banach $*$-algebra may even be a Hermitian Banach $*$-algebra with respect to a different norm.
\end{exm}


\section{Spectral interpolation of triple Banach $*$-algebras}

We now define an interpolation condition for the spectral radius of Banach $*$-algebras. The theory developed around this condition is the main ingredient in our investigation of Banach $*$-algebras containing quasi-Hermitian dense $*$-subalgebras.


\begin{defn}\label{D:interpolation triple Banach $*$-algebras}
	Suppose $\fA\subseteq \fB \subseteq \fC$ is a nested triple of Banach $*$-algebras and $\fS$ is a dense $*$-subalgebra of $\fA$. We say $(\fA,\fB,\fC)$ is a {\it spectral interpolation of triple Banach $*$-algebras relative to $\fS$} if
  \begin{equation}\label{Eq:interpolation triple spec raduis relation}
    r_\fB(a) \leq r_\fA(a)^{1-\theta}r_\fC(a)^\theta
  \end{equation}
  for every $a\in \fS_h$.
\end{defn}


The following remark links Example \ref{Ex:Pytlik1} with spectral interpolation of triple Banach $*$-algebras.

\begin{rem}\label{R:Spec interpolation triple-Pytlik}
The condition \eqref{Eq:interpolation triple spec raduis relation} in Definition \ref{D:interpolation triple Banach $*$-algebras} appears in \cite{Pytlik} where Pytlik shows in the proof of \cite[Theorem 1]{Pytlik} that $(L^1(G,\om),L^1(G), C^*_r(G))$ is a spectral interpolation of triple Banach $*$-algebras relative to $L^1(G,\om)$ whenever $G$ is a locally compact group and $\omega$ is a symmetric weakly additive weight on $G$ such that $\frac{1}{\om}\in L^p(G)$ for some $0<p<\infty$. From this, he deduces
\begin{equation*}
r_{L^1(G,\om)}(f)=r_{L^1(G)}(f)=r_{C^*_r(G)}(f),
\end{equation*}
for every $f\in L^1(G,\om)$ and, hence, that $L^1(G,\omega)$ is Hermitian.
%
\end{rem}

We use spectral interpolation of triple Banach $*$-algebras to study a class of Banach $*$-algebras related to convolution operators in the next section.

The main application of relation \eqref{Eq:interpolation triple spec raduis relation} for a nested triple $\fA\subseteq\fB\subseteq\fC$ in the existing literature is to show that the spectral radii of the three algebras coincide on $\fA$ whenever the spectral radii of $\fA$ and $\fB$ coincide on $\fA$. This is how Pytlik applied condition \eqref{Eq:interpolation triple spec raduis relation}  in \cite{Pytlik}, as elaborated on in the previous remark. We will be applying this condition differently since we do not wish to assume equality between $r_{\fA}$ and $r_{\fB}$ on $\fA$, or even on a dense $*$-suablgebra of $\fA$. Instead we are interested in the case when $\fA$ admits a quasi-Hermitian dense $*$-subalgebra $\fS$. Though this initially seems too weak of a condition to garnish much information from, we show it has important consequences when $\fA$, $\fB$, and $\fC$ are commutative. From this, we deduce the equality of $r_\fB$ and $r_\fC$ on $\fS_h$ in the noncommutative case.


\begin{prop}\label{P:interpolation triple-commutative alg}
  The following conditions hold for a spectral interpolation of triple $*$-semisimple commutative Banach $*$-algebras  $(\fA,\fB,\fC)$  relative to a quasi-Hermitian dense $*$-subalgebra $\fS$ of $\fA$.\\
   $(i)$ $\fA$, $\fB$, and $\fC$ are all Hermitian;\\
   $(ii)$ $C^*(\fB)=C^*(\fC)$ and $\mathcal F_\fB$ is the restriction of $\mathcal F_\fC$ to $\fB$;\\
   $(iii)$ $\fB$ is a spectral subalgebra of $\fC$.
\end{prop}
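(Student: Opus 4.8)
The plan is to get (i) almost directly from commutativity and the fact that spectra only shrink as one passes to a larger algebra, and then to reduce (ii) and (iii) to the single topological statement $\Omb=\Omc$, which I extract from the interpolation inequality \eqref{Eq:interpolation triple spec raduis relation} by a Urysohn argument on the character spaces. For (i), first note that $\fS$, being a dense $*$-subalgebra of $\fA$ and $\fA$ a dense $*$-subalgebra of $\fB\subseteq\fC$, is automatically a dense $*$-subalgebra of both $\fB$ and $\fC$. Since $\fS$ is quasi-Hermitian in $\fA$, for every $a\in\fS_h$ we have $\Sp_\fA(a)\subseteq\R$; because $\fA\subseteq\fB\subseteq\fC$ are subalgebras, invertibility passes upward, so $\Sp_\fC(a)\cup\{0\}\subseteq\Sp_\fB(a)\cup\{0\}\subseteq\Sp_\fA(a)\cup\{0\}\subseteq\R$. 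Hence $\fS$ is quasi-Hermitian in each of $\fA,\fB,\fC$, and as all three are commutative, Proposition \ref{T:Hermitian vs quasi Hermitian-comm alg} yields that $\fA$, $\fB$, $\fC$ are Hermitian. (Part (i) uses only the nesting and commutativity, not the interpolation bound.)

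For (ii) and (iii), since all three algebras are now Hermitian, commutative and $*$-semisimple, Lemma \ref{T:Hermition Ban alg-equivalent conditions}(vi) identifies their enveloping $C^*$-algebras with $C_0(\Oma)$, $C_0(\Omb)$, $C_0(\Omc)$ via the Gelfand transforms. The dense inclusions extend, as in the proof of Proposition \ref{P:dense Spec subalg-same C envelope}, to surjective $*$-homomorphisms $C_0(\Oma)\twoheadrightarrow C_0(\Omb)\twoheadrightarrow C_0(\Omc)$, which by Gelfand duality are dual to homeomorphisms of $\Omc$ and $\Omb$ onto \emph{closed} subspaces of $\Oma$, implemented by restriction of characters; so I may regard $\Omc\subseteq\Omb\subseteq\Oma$ as closed subsets. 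Writing $\|f\|_{X}:=\sup_{X}|f|$ and using \eqref{Eq:spec. radius vs infinity norm Gelfand rep}, the spectral radii of $a\in\fS_h$ become $r_\fA(a)=\|\Fa(a)\|_{\Oma}$, $r_\fB(a)=\|\Fa(a)\|_{\Omb}$, $r_\fC(a)=\|\Fa(a)\|_{\Omc}$, so the interpolation inequality reads
\[
\|f\|_{\Omb}\le \|f\|_{\Oma}^{1-\theta}\|f\|_{\Omc}^{\theta},\qquad f=\Fa(a),\ a\in\fS_h .
\]
Because $\fA$ is Hermitian, $\Fa(\fS_h)$ is dense in $C_0(\Oma,\R)$ (Stone–Weierstrass, together with $\fS=\fS_h+i\fS_h$ and density of $\fS$ in $\fA$), and each functional $f\mapsto\|f\|_{X}$ is continuous on $C_0(\Oma)$; hence the displayed inequality persists for every $f\in C_0(\Oma,\R)$.

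The decisive step is to conclude $\Omb=\Omc$. If not, pick $\varphi\in\Omb\setminus\Omc$; as $\Omc$ is closed in $\Oma$ and $\{\varphi\}$ is compact, Urysohn's lemma gives $f\in C_0(\Oma,\R)$ with $0\le f\le1$, $f(\varphi)=1$, and $f|_{\Omc}=0$. Then $\|f\|_{\Oma}\le1$, $\|f\|_{\Omc}=0$, and $\|f\|_{\Omb}\ge f(\varphi)=1$, so the inequality forces $1\le 1^{1-\theta}\cdot 0^{\theta}=0$ (using $\theta>0$), a contradiction. Thus $\Omb=\Omc$ as closed subsets of $\Oma$. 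This gives (ii): $C^*(\fB)=C_0(\Omb)=C_0(\Omc)=C^*(\fC)$, and for $b\in\fB$ the identity $\Fb(b)(\psi)=\psi(b)=\Fc(b)(\psi)$ shows $\Fb=\Fc|_\fB$. For (iii), \eqref{Eq:specrum vs image of Gelfand rep} gives $\Sp_\fB(b)\cup\{0\}=\{\psi(b):\psi\in\Omb\}\cup\{0\}=\{\psi(b):\psi\in\Omc\}\cup\{0\}=\Sp_\fC(b)\cup\{0\}$ for every $b\in\fB$, i.e. $\fB$ is a spectral subalgebra of $\fC$.

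The main obstacle I anticipate is the factor $r_\fA(a)=\|f\|_{\Oma}$ in the interpolation bound: a naive attempt to build the separating element inside $\fB$ (or $C_0(\Omb)$) leaves $\|f\|_{\Oma}$ uncontrolled, since $\Oma$ may be strictly larger than $\Omb$ and $f$ could be large off $\Omb$. The trick that circumvents this is to first extend the inequality to all of $C_0(\Oma,\R)$ and then construct the Urysohn function directly on $\Oma$ with global sup-norm $\le1$, so that $\|f\|_{\Oma}\le1$ is automatic and the vanishing of $\|f\|_{\Omc}$ (with $\theta>0$) closes the argument.
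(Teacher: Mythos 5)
Your proof is correct, and for parts (ii) and (iii) it takes a genuinely different route from the paper's. Part (i) is identical: spectra shrink along the nesting, so $\fS$ is quasi-Hermitian in all three algebras, and Proposition \ref{T:Hermitian vs quasi Hermitian-comm alg} applies. For (ii)--(iii), the paper stays at the level of enveloping C$^*$-algebras: it upgrades \eqref{Eq:interpolation triple spec raduis relation} (applied to $a^*a$, $a\in\fS$, using Hermitianness to convert spectral radii into C$^*$-norms) to the norm inequality $\|\varphi_{\fA,\fB}(\xi)\|\leq\|\xi\|^{1-\theta}\|\varphi_{\fA,\fC}(\xi)\|^{\theta}$ on all of $C^*(\fA)$, reads off $\ker\varphi_{\fA,\fC}=\ker\varphi_{\fA,\fB}$, and concludes that $\varphi_{\fB,\fC}$ is injective, hence an isometric $*$-isomorphism; (iii) then falls out of Lemma \ref{T:Hermition Ban alg-equivalent conditions}$(iii)$ and $(vi)$. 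You instead dualize: after invoking Lemma \ref{T:Hermition Ban alg-equivalent conditions}$(vi)$ to identify the envelopes with $C_0(\Oma)$, $C_0(\Omb)$, $C_0(\Omc)$, the surjections become restriction maps to nested closed subsets $\Omc\subseteq\Omb\subseteq\Oma$ (since the kernel of a quotient of $C_0(\Oma)$ is the vanishing ideal of a closed set), you extend the interpolation inequality by density and continuity of the sup-norm functionals to all of $C_0(\Oma,\R)$ (the density of $\Fa(\fS_h)$ in $C_0(\Oma,\R)$ is legitimate: quasi-Hermitianness makes $\Fa|_\fS$ a $*$-homomorphism, Stone--Weierstrass applies, and the real-part trick $\|\tfrac{g+\bar g}{2}-f\|\leq\|g-f\|$ handles Hermitian parts), and a Urysohn function then forces $\Omb=\Omc$, which yields (ii) and (iii) simultaneously. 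The two arguments are formally dual --- equality of kernels versus equality of their hull sets --- but they buy different things: the paper's kernel computation is norm-theoretic and needs no topology on the character spaces, while yours gives a concrete geometric picture and correctly isolates the one real issue, namely controlling the $r_\fA$ factor, which you neutralize by building the Urysohn function with global sup-norm at most $1$ on $\Oma$ rather than trying to realize the separating element inside $\fB$. Your flagged requirement $\theta>0$ is exactly the intended reading of Definition \ref{D:interpolation triple Banach $*$-algebras}, and it is where the hypothesis genuinely enters (for $\theta=0$ the inequality is vacuous and the conclusion false in general).
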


\begin{proof}
  $(i)$ If $a\in \fS_h$, then
  $$\Sp_\fC(a)\cup \{0\}\subseteq \Sp_\fB(a)\cup \{0\} \subseteq \Sp_\fA(a)\cup \{0\} \subseteq \R.$$
  So $\fS$ is also quasi-Hermitian in $\fB$ and $\fC$ and, hence, $\fA$, $\fB$ and $\fC$ are Hermitian by Proposition \ref{T:Hermitian vs quasi Hermitian-comm alg}.

  $(ii)$ Consider the following commutative diagram.
  \begin{displaymath}
    \xymatrix{
        \fA \ar[r]^{id_{\fA,\fB}} \ar[d]_{\pi_\fA} &   \fB \ar[r]^{id_{\fB,\fC}} \ar[d]_{\pi_\fB}  & \fC \ar[d]^{{\pi_\fC}}\\
        C^*(\fA) \ar[r]^{\varphi_{\fA,\fB}}       &  C^*(\fB)  \ar[r]^{\varphi_{\fB,\fC}} & C^*(\fC) }
\end{displaymath}
The maps $id_{\fA,\fB}$ and $id_{\fB,\fC}$ denote the inclusion maps, $\pi_\fA$, $\pi_\fB$, and $\pi_\fC$ denote the canonical inclusions of the given Banach $*$-algebra into its corresponding C$^*$-envelope, and $\varphi_{\fA,\fB}$ and $\varphi_{\fB,\fC}$ are the continuous extensions of $id_{\fA,\fB}$ and $id_{\fB,\fC}$ to $*$-homomorphisms between the corresponding C$^*$-envelopes. We further note both $\varphi_{\fA,\fB}$ and $\varphi_{\fB,\fC}$ are surjective since  these maps have dense images and $*$-homomorphisms between C*-algebras have closed ranges (see \cite[Theorem 10.1.11]{Pal2}). On the other hand, by part $(i)$, $\fA$, $\fB$ and $\fC$ are Hermitian. So, by Lemma \ref{T:Hermition Ban alg-equivalent conditions}$(iii)$,  $\fA$, $\fB$ and $\fC$ are spectral subalgebras of $C^*(\fA)$, $C^*(\fB)$ and $C^*(\fC)$, respectively. Therefore the hypothesis relation \eqref{Eq:interpolation triple spec raduis relation} implies that for every $a\in \fS$,
\begin{eqnarray*}
 \|a\|_{C^*(\fB)}^2 &=&  \|a^*a\|_{C^*(\fB)} \\
   &=&  r_\fB(a^*a) \\
   &\leq & r_\fA(a^*a)^{1-\theta}r_\fC(a^*a)^\theta \\
  &=&  \|a^*a\|_{C^*(\fA)}^{1-\theta} \|a^*a\|_{C^*(\fC)}^\theta \\
  &=&   \|a\|_{C^*(\fA)}^{2(1-\theta)} \|a\|_{C^*(\fC)}^{2\theta}.
\end{eqnarray*}
Hence, by the density of $\fS$,
\begin{equation}\label{Eq:interpolation norm relation-C envelopes}
   \|\varphi_{\fA,\fB}(\xi)\|_{C^*(\fB)}\leq \|\xi\|_{C^*(\fA)}^{1-\theta} \|\varphi_{\fA,\fC}(\xi)\|_{C^*(\fC)}^{\theta}
  \end{equation}
  for every $\xi\in C^*(\fA)$, where $\varphi_{\fA,\fC}:=\varphi_{\fB,\fC}\circ \varphi_{\fA,\fB}$.
  It follows from equation \eqref{Eq:interpolation norm relation-C envelopes}
  \ that
$\ker \varphi_{\fA,\fC}=\ker \varphi_{\fA,\fB}$. Hence, $\varphi_{\fB,\fC}$ is injective since $\varphi_{\fA,\fB}$ is surjective. Thus $\varphi_{\fB,\fC}$ is an isometric $*$-isomorphism so that $C^*(\fB)=C^*(\fC)$. The remainder of part $(ii)$ and the verification part $(iii)$ follow from Lemma \ref{T:Hermition Ban alg-equivalent conditions}$(iii)$ and $(vi)$.
\end{proof}

We now deduce the main result of this section. Recall the straightforward fact that if $E$ is a compact subset of $\C$ such that its boundary $\partial E$ lies in $\R$, then $E$ itself must be a subset of $\R$.

\begin{thm}\label{T:interpolation triple-general alg}
   If $(\fA,\fB,\fC)$ is a spectral interpolation of triple $*$-semisimple Banach $*$-algebras relative to a quasi-Hermitian dense $*$-subalgebra $\fS$ of $\fA$, then $\fS_h$ has invariant spectral radius in $(\fB,\fC)$. In particular, we have the canonical $*$-isomorphism $C^*(\fB)=C^*(\fC)$, and
  \begin{equation}\label{Eq:interpolation norm relation-equality spec radius}
    r_\fB(a)=r_\fC(a)
  \end{equation}
  for every $a\in \fS_h$.
  \end{thm}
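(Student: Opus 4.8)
The plan is to reduce everything to the commutative case already settled in Proposition \ref{P:interpolation triple-commutative alg} by localizing at a single Hermitian element. Fix $a\in\fS_h$. Since $a=a^*$, the (non-closed) $*$-subalgebra $\fS_a$ of $\fS$ generated by $a$, namely the polynomials in $a$ with zero constant term, is commutative, and it is $*$-closed because $a=a^*$. Denote by $\fA_a\subseteq\fB_a\subseteq\fC_a$ the closures of $\fS_a$ inside $\fA$, $\fB$, and $\fC$, respectively. Because the inclusions $\fA\subseteq\fB\subseteq\fC$ are continuous and injective with dense range, the restricted maps $\fA_a\hookrightarrow\fB_a\hookrightarrow\fC_a$ are again continuous dense embeddings (the image of $\fA_a$ in $\fB_a$ already contains the dense set $\fS_a$), so $(\fA_a,\fB_a,\fC_a)$ is a nested triple of commutative Banach $*$-algebras in which $\fS_a$ is a dense $*$-subalgebra. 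My aim is to verify that this triple, relative to $\fS_a$, meets all the hypotheses of Proposition \ref{P:interpolation triple-commutative alg}, and then to transport the resulting equality $r_{\fB_a}=r_{\fC_a}$ back to $\fB$ and $\fC$.

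The first thing to check is $*$-semisimplicity of the three localized algebras. This follows from a restriction argument: every $*$-representation $\pi$ of $\fA$ restricts to a $*$-representation of $\fA_a$, so any $x\in(\fA_a)_\fR$ satisfies $\pi(x)=0$ for all such $\pi$, whence $x\in\fA_\fR=\{0\}$; the same reasoning applies to $\fB_a$ and $\fC_a$. The second ingredient is the interpolation inequality \eqref{Eq:interpolation triple spec raduis relation} for the localized triple. Here I would invoke the elementary fact that the spectral radius of an element is read off from the norms of its powers alone, so that $r_{\fA_a}(b)=r_\fA(b)$, $r_{\fB_a}(b)=r_\fB(b)$, and $r_{\fC_a}(b)=r_\fC(b)$ for every $b\in\fS_a$, since $\fS_a$ carries the restricted norms. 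As every Hermitian element of $\fS_a$ lies in $\fS_h$, the hypothesis \eqref{Eq:interpolation triple spec raduis relation} transfers verbatim.

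The step I expect to be the crux is showing that $\fS_a$ is quasi-Hermitian in $\fA_a$, i.e.\ that $\Sp_{\fA_a}(b)\subseteq\R$ for every $b\in(\fS_a)_h$. Quasi-Hermiticity of $\fS$ in $\fA$ only delivers $\Sp_\fA(b)\subseteq\R$, and passing to the smaller algebra $\fA_a$ can only enlarge the spectrum, so the inclusion does not transfer directly. The remedy is spectral permanence for closed subalgebras: a point on the topological boundary of $\Sp_{\fA_a}(b)$ is a topological divisor of zero in $\fA_a$ and remains one in $\fA$, hence fails to be invertible there, so $\partial\Sp_{\fA_a}(b)\subseteq\Sp_\fA(b)\subseteq\R$. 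The elementary fact recalled immediately before the statement, that a compact subset of $\C$ with real boundary is itself contained in $\R$, then forces $\Sp_{\fA_a}(b)\subseteq\R$. This is precisely the point at which that preparatory remark is used.

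With all hypotheses verified, Proposition \ref{P:interpolation triple-commutative alg} yields $r_{\fB_a}(a)=r_{\fC_a}(a)$, which by the power-norm identity of the second paragraph reads $r_\fB(a)=r_\fC(a)$. Since $a\in\fS_h$ was arbitrary, $\fS_h$ has invariant spectral radius in $(\fB,\fC)$. Finally, as $\fS$ is dense in $\fA$ and $\fA$ is dense in $\fB$, the subalgebra $\fS$ is a dense $*$-subalgebra of $\fB$ (and of $\fC$); applying Proposition \ref{P:dense Spec subalg-same C envelope} to the nested pair $\fB\subseteq\fC$, whose shared dense $*$-subalgebra $\fS$ has just been shown to have invariant spectral radius in $(\fB,\fC)$, produces the canonical identification $C^*(\fB)=C^*(\fC)$ and completes the argument.
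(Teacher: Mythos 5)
Your proof is correct and follows essentially the same route as the paper's: both arguments localize at a fixed $a\in\fS_h$ to the commutative closed $*$-subalgebras generated by $a$, transfer quasi-Hermiticity to the localized triple via the boundary/topological-divisor-of-zero argument (the paper cites \cite[Proposition 5.12]{BD} for $\partial\Sp_{\fA(a)}(a')\subseteq\Sp_{\fA}(a')\cup\{0\}$, the $\cup\{0\}$ being harmless exactly as in your version since $0\in\R$), apply Proposition \ref{P:interpolation triple-commutative alg}, use spectral-radius permanence for closed subalgebras to return to $\fB$ and $\fC$, and obtain $C^*(\fB)=C^*(\fC)$ from Proposition \ref{P:dense Spec subalg-same C envelope} applied to the pair $\fB\subseteq\fC$ with dense $*$-subalgebra $\fS$. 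Your explicit verifications of $*$-semisimplicity of the localized algebras (by restricting $*$-representations) and of the power-norm identity $r_{\fB_a}(b)=r_\fB(b)$ are details the paper leaves implicit, but they are correct and the overall argument is the same.
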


\begin{proof}
Fix $a\in \fS_h$ and let $\fS(a)$ be the $*$-algebra generated by $a$ in $\fS$. We will let $\fA(a)$, $\fB(a)$ and $\fC(a)$ be the (commutative) Banach $*$-algebras generated by $a$ in $\fA$, $\fB$, and $\fC$, respectively. Then $(\fA(a), \fB(a),\fC(a))$ is a spectral interpolation of triple $*$-semisimple commutative Banach $*$-algebras relative to $\fS(a)$. Note $\fS(a)$ is quasi-Hermitian in $\fA(a)$ since
$$\partial \Sp_{\fA(a)}(a')\subseteq \Sp_{\fA}(a') \cup \{0\} \subseteq \R$$
for every $a'\in \fS(a)_h$ (see \cite[Proposition 5.12]{BD}) implies $\Sp_{\fA(a)}(a')\subseteq\R$ for all $a'\in \fS(a)_h$.
Therefore, $\fB(a)$ is a spectral subalgebra of $\fC(a)$ by Proposition \ref{P:interpolation triple-commutative alg} and, thus,
\begin{eqnarray*}
  r_\fB(a) = r_{\fB(a)}(a) = r_{\fC(a)}(a) = r_\fC(a).
\end{eqnarray*}
Hence, $\fS_h$ has invariant spectral radius in ($\fB$, $\fC$). The equality $C^*(\fB)=C^*(\fC)$ follows from Proposition \ref{P:dense Spec subalg-same C envelope}.
\end{proof}

\section{Quasi-Hermitian algebras associated to locally compact groups}\label{S:quasi-herm groups}

 We apply methods developed in the preceding section 
to Banach $*$-algebras associated to locally compact groups in this section. This culminates in a proof that  every quasi-Hermitian locally compact group is amenable.


Clearly every Hermitian group is also quasi-Hermitian. The converse is not true. A locally compact group $G$ is \emph{quasi-symmetric} if $\Sp_{L^1(G)}(f^**f)\subseteq [0,\infty)$ for every $f\in C_c(G)$ (see \cite[Definitions 5 and 6]{Pal2}). Palma shows every locally compact group with subexponential growth is quasi-symmetric in \cite[Proposition 3]{Palma2} by applying a result of Hulanicki (see \cite{Hul2}) and the Barnes-Hulanicki Theorem. 
Every quasi-symmetric locally compact group is quasi-Hermitian by the proof of Proposition \ref{T:Hermitian vs quasi Hermitian-comm alg} $(iii)\Longrightarrow(i)$. Therefore, taking into account the examples of non-Hermitian locally finite groups  described in Example \ref{E:Hermitian- weighted locally finite alg}, we state the following.

\begin{prop}\label{P:quasi Hermitian groups-exponential growth}
Every locally compact group with subexponential growth is quasi-Hermitian. In particular, the class of quasi-Hermitian locally compact groups is strictly larger than the class of Hermitian locally compact groups.
\end{prop}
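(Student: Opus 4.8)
The plan is to prove Proposition \ref{P:quasi Hermitian groups-exponential growth} in two steps, the first of which establishes the implication ``subexponential growth $\Rightarrow$ quasi-Hermitian'' and the second of which produces a witness showing the inclusion of classes is strict. For the first step, I would simply assemble the chain of implications that the surrounding text has already set up. Suppose $G$ has subexponential growth. By Palma's result (\cite[Proposition 3]{Palma2}), $G$ is quasi-symmetric, meaning $\Sp_{L^1(G)}(f^**f)\subseteq[0,\infty)$ for every $f\in C_c(G)$. I would then invoke the implication $(iii)\Longrightarrow(i)$ in the proof of Proposition \ref{T:Hermitian vs quasi Hermitian-comm alg}: that argument is purely spectral (it shows $\{\alpha^2:\alpha\in\Sp_\fA(a)\}\subseteq\Sp_\fA(a^*a)\subseteq[0,\infty)$ forces $\Sp_\fA(a)\subseteq\R$ for Hermitian $a$) and applies verbatim with $\fA=L^1(G)$, $\fS=C_c(G)$. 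Hence $\Sp_{L^1(G)}(f)\subseteq\R$ for every $f\in C_c(G)_h$, which is exactly Definition \ref{D:qH-group}, so $G$ is quasi-Hermitian.

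For the second step --- strictness --- the idea is to exhibit a group that is quasi-Hermitian but not Hermitian. Here I would lean on Example \ref{E:Hermitian- weighted locally finite alg}: take a locally finite countable discrete group $G$ that is \emph{not} Hermitian (such groups exist by \cite{Hul4}), equipped with the weakly additive weight $\om$ of that example, chosen so that $\ell^1(G,\om)$ is Hermitian. Every finite group has subexponential (indeed bounded) growth, but a locally finite group need not, so $G$ itself need not have subexponential growth; the strictness does not come from the first implication but from the weighted structure. Since $\ell^1(G,\om)$ is Hermitian, it is in particular quasi-Hermitian as a dense $*$-subalgebra of $\ell^1(G)$, which gives $\Sp_{\ell^1(G)}(f)\subseteq\R$ for every $f\in C_c(G)_h=\ell^1(G,\om)_h\cap C_c(G)$; in fact for all of $C_c(G)$. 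Thus $G$ is quasi-Hermitian as a group, yet by hypothesis $\ell^1(G)$ is not Hermitian, so $G$ is not a Hermitian group. This exhibits a quasi-Hermitian group outside the Hermitian class and establishes strict containment.

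The main obstacle I anticipate is making the strictness argument airtight, specifically the passage from ``$\ell^1(G,\om)$ is Hermitian'' to ``$C_c(G)$ is quasi-Hermitian in $\ell^1(G)$.'' One must be careful that the relevant spectra are computed in $\ell^1(G)$, not in the weighted algebra: a priori $\Sp_{\ell^1(G,\om)}(f)$ and $\Sp_{\ell^1(G)}(f)$ could differ. The clean way around this is to recall (as Example \ref{Ex:Pytlik1} and Remark \ref{R:Spec interpolation triple-Pytlik} provide) that $\ell^1(G,\om)$ is a differential subalgebra of $\ell^1(G)$, hence $\ell^1(G,\om)_h$ has invariant spectral radius in $\ell^1(G)$ and is in fact a spectral subalgebra; this forces $\Sp_{\ell^1(G)}(f)=\Sp_{\ell^1(G,\om)}(f)\subseteq\R$ for Hermitian $f\in C_c(G)\subseteq\ell^1(G,\om)$. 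With that spectral-permanence point secured, the rest is a matter of citing the Hermitian property of $\ell^1(G,\om)$ and the existence result of \cite{Hul4}, so the genuinely delicate step is purely the spectral-invariance bookkeeping between the two algebras.
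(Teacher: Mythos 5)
Your first half is exactly the paper's argument: Palma's \cite[Proposition 3]{Palma2} gives quasi-symmetry, and the ``easy part'' of the Shirali--Ford argument from the proof of Proposition \ref{T:Hermitian vs quasi Hermitian-comm alg} (which nowhere uses commutativity) upgrades quasi-symmetry to quasi-Hermitianness with $\fA=L^1(G)$ and $\fS=C_c(G)$. Nothing to add there.

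The strictness half, however, rests on a false premise and consequently takes a detour the paper does not need. Your claim that ``a locally finite group need not have subexponential growth'' is wrong: in a locally finite discrete group every finite subset $S$ generates a finite subgroup $H$, so $S^n\subseteq H$ and $\nu(S^n)=|S^n|$ is bounded, whence $\nu_S=1$ for every such $S$. Locally finite groups therefore always have subexponential growth, and this is precisely the paper's one-line strictness argument: Hulanicki's locally finite non-Hermitian groups \cite{Hul4} are already quasi-Hermitian by the first implication, so the strictness \emph{does} come from the first implication, contrary to what you assert; no weighted algebras are required. Your alternative route through $\ell^1(G,\om)$ does reach the correct conclusion, but the step you flag as delicate is mishandled in both directions. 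The assertion that $\ell^1(G,\om)$ ``is in fact a spectral subalgebra'' of $\ell^1(G)$ does not follow from the differential-subalgebra property as cited: \cite[Lemma 3.2]{GK1} (and the paper's example following Theorem \ref{T:Barnes Theorem}) upgrades invariant spectral radius to spectral permanence only when the \emph{ambient} algebra is Hermitian, and here $\ell^1(G)$ is non-Hermitian by your very choice of $G$. Fortunately, no spectral invariance is needed: for any continuous embedding of Banach $*$-algebras, quasi-invertibility passes to the larger algebra, giving the elementary inclusion $\Sp_{\ell^1(G)}(f)\cup\{0\}\subseteq\Sp_{\ell^1(G,\om)}(f)\cup\{0\}$ (this is the inclusion used at the start of the proof of Proposition \ref{P:interpolation triple-commutative alg}), and combined with the Hermitianness of $\ell^1(G,\om)$ this already yields $\Sp_{\ell^1(G)}(f)\subseteq\R$ for every $f\in C_c(G)_h$. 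So the direction you actually need is trivial, the stronger statement you invoked is unjustified, and once the growth claim is corrected the entire weighted-algebra apparatus collapses into a direct appeal to the first implication, as in the paper.
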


Let $G$ be a locally compact group and $1\leq p \leq \infty$. The left-regular representation of $L^1(G)$ on $L^p(G)$ is given by
\begin{equation}\label{Eq:left reg rep-Lp}
  \lambda_p:L^1(G)\to B(L^p(G)) \ \ , \ \ \lambda_p(f)g=f*g.
\end{equation}
for all $f\in L^1(G)$ and $g\in L^p(G)$.

For $p=1$, this is nothing but an isometric embedding of $L^1(G)$ as convolution operators over itself. For $1\leq p\leq\infty $, the norm-closure of $\lambda_p(L^1(G))$ inside $B(L^p(G))$ is denoted by $\pf_p(G)$ and called the {\it algebra of $p$-pseudofunctions} on $G$. Clearly we have $\pf_2(G)=C^*_r(G)$.

Suppose $1\leq p\leq q\leq\infty$ are conjugate, i.e. $1/p+1/q=1$, and consider the (conjugate) duality relation $L^p(G)^*\cong L^q(G)$ given by
$$\la f , g \ra:=\int_G f(s)\overline{g(s)}ds \ \ \ (f\in L^p(G),g\in L^q(G)).$$
Then, for every $f\in L^1(G)$, $g\in L^p(G)$ and $h\in L^q(G)$,
\begin{eqnarray*}
  \la \lambda_p(f^*)g, h \ra &=& \la f^**g , h \ra \\
   &=& \int_G \int_G f^*(s)g(s^{-1}t)\overline{h(t)}dsdt  \\
   &=& \int_G \int_G f^*(s^{-1})\Delta(s^{-1})g(st)\overline{h(t)}dsdt \\
   &=& \int_G \int_G \overline{f(s)}g(st)\overline{h(t)}dsdt \\
   &=& \int_G \int_G \overline{f(s)}g(t)\overline{h(s^{-1}t)}dtds \\
   &=& \int_G g(t) \overline{\int_G f(s)h(s^{-1}t)ds} dt \\
   &=& \int_G g(t)\overline{f*h(t)} dt \\
   &=& \la g , \lambda_q(f)h \ra.
\end{eqnarray*}
In short,
\begin{equation}\label{Eq:anti isometry-Lp Lq-Pseuduefunctions}
  \la \lambda_p(f^*)g, h \ra = \la g , \lambda_q(f)h \ra.
\end{equation}
for every $f\in L^1(G)$, $g\in L^p(G)$ and $h\in L^q(G)$.
The relation \eqref{Eq:anti isometry-Lp Lq-Pseuduefunctions} may be compared with \cite[Note 2]{Branes} and its proof.

\begin{prop}
 Let $G$ be a locally compact group, and suppose $1\leq p, q \leq\infty$ satisfies $1/p+1/q=1$. The group algebra $L^1(G)$ is a normed $*$-algebra with respect to
   \begin{equation}\label{Eq:involutive pseudofunctions-norm}
  \|f\|_{\pf_p^*(G)}:=\max \{\|\lambda_p(f)\|_{B(L^p(G))},\|\lambda_q(f)\|_{B(L^q(G))} \} \ \ \ (f\in L^1(G)),
\end{equation}
and the standard convolution and involution on $L^1(G)$.
\end{prop}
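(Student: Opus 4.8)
The plan is to verify, in turn, the three properties that make $(L^1(G),\|\cdot\|_{\pf_p^*(G)})$ a normed $*$-algebra: that $\|\cdot\|_{\pf_p^*(G)}$ is a norm, that it is submultiplicative, and that the involution is isometric with respect to it. Throughout I would use only that each of $\lambda_p,\lambda_q$ is a linear, contractive algebra homomorphism (contractive by Young's inequality $\|\lambda_r(f)\|_{B(L^r(G))}\le\|f\|_{L^1(G)}$, which in particular shows the quantity in \eqref{Eq:involutive pseudofunctions-norm} is finite), together with the duality relation \eqref{Eq:anti isometry-Lp Lq-Pseuduefunctions}.

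For the norm axioms, homogeneity is immediate since each $\lambda_r$ is linear and a maximum of homogeneous quantities is homogeneous, and the triangle inequality follows from subadditivity of each operator norm together with the elementary fact $\max\{a_1+b_1,a_2+b_2\}\le\max\{a_1,a_2\}+\max\{b_1,b_2\}$. For definiteness, I would use that at least one of $p,q$ is finite, say $p<\infty$: if $\|f\|_{\pf_p^*(G)}=0$ then $\lambda_p(f)=0$, so $f*e_i=0$ for a bounded approximate identity $(e_i)\subseteq C_c(G)\subseteq L^p(G)$ of $L^1(G)$, while $f*e_i\to f$ in $L^1(G)$, whence $f=0$. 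Submultiplicativity follows from $\lambda_r(f*g)=\lambda_r(f)\lambda_r(g)$ together with submultiplicativity of the operator norms and the companion inequality $\max\{a_1b_1,a_2b_2\}\le\max\{a_1,a_2\}\max\{b_1,b_2\}$ for nonnegative reals.

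The substantive step, and the only place the duality relation is needed, is that the involution is isometric. Relation \eqref{Eq:anti isometry-Lp Lq-Pseuduefunctions} states precisely that, under the identification $L^p(G)^*\cong L^q(G)$ afforded by the pairing, $\lambda_q(f)$ is the Banach-space adjoint of $\lambda_p(f^*)$; assuming $p<\infty$ (as we may, after relabelling, since one index is always finite) this identification is isometric, and since Banach adjoints preserve the operator norm we obtain $\|\lambda_p(f^*)\|_{B(L^p(G))}=\|\lambda_q(f)\|_{B(L^q(G))}$. Applying the same relation with $f$ replaced by $f^*$ and using $(f^*)^*=f$ gives $\|\lambda_q(f^*)\|_{B(L^q(G))}=\|\lambda_p(f)\|_{B(L^p(G))}$. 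Combining the two identities,
\[
\|f^*\|_{\pf_p^*(G)}=\max\{\|\lambda_p(f^*)\|,\|\lambda_q(f^*)\|\}=\max\{\|\lambda_q(f)\|,\|\lambda_p(f)\|\}=\|f\|_{\pf_p^*(G)}.
\]

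I expect the isometric-involution step to be the only delicate point, precisely because one must be sure the pairing computes the two operator norms faithfully. This is clean exactly because one of $p,q$ is always finite, so the relevant operator is realized as an honest Banach-space adjoint on a genuine dual space—where norm preservation is automatic—rather than merely on a norming subspace, for which one would have to argue the norm recovery separately.
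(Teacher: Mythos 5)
Your proof is correct and takes essentially the same route as the paper: both hinge on relation \eqref{Eq:anti isometry-Lp Lq-Pseuduefunctions}, which the paper exploits via two sup-estimates (showing $\|\lambda_p(f^*)\|_{B(L^p(G))}\le\|\lambda_q(f)\|_{B(L^q(G))}$, switching $p$ and $q$, then replacing $f$ by $f^*$), while you package the same relation as the statement that $\lambda_q(f)$ is the Banach adjoint of $\lambda_p(f^*)$ under the identification $L^p(G)^*\cong L^q(G)$, getting the norm identity in one step; your filled-in verifications of definiteness (via an approximate identity) and submultiplicativity correctly supply what the paper calls straightforward. The one point to note is that when $p=1$ your appeal to the isometric duality $L^1(G)^*\cong L^\infty(G)$ relies on decomposability of Haar measure (which does hold), whereas the paper's inequality argument needs only that the pairing is norming.
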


\begin{proof}
Observe that $\|\cdot\|_{\pf_p^*(G)}$ is well-defined on $L^1(G)$ since $\|f\|_{\pf_p^*(G)}\leq \|f\|_1$ for every $f\in L^1(G)$.
It is straightforward to verify that $(L^1(G),\|\cdot\|_{\pf_p^*(G)})$ is a normed algebra. We now must show the involution on $L^1(G)$ is an isometry with respect to $\|\cdot\|_{\pf_p^*(G)}$. Let $f\in L^1(G)$. It follows from \eqref{Eq:anti isometry-Lp Lq-Pseuduefunctions} that
$$\sup \{ | \la \lambda_p(f^*)g, h \ra| : g\in L^p(G), h\in L^q(G), \|g\|_p\leq 1,\|h\|_q\leq 1 \} \leq \|\lambda_q(f)\|_{B(L^q(G))}.$$
Therefore
$$\|\lambda_p(f^*)\|_{B(L^p(G))} \leq \|\lambda_q(f)\|_{B(L^q(G))}.$$
By switching $p$ and $q$, we will see that
$$\|\lambda_q(f^*)\|_{B(L^q(G))} \leq \|\lambda_p(f)\|_{B(L^p(G))}.$$
Combining the preceding inequalities with \eqref{Eq:involutive pseudofunctions-norm}, we get
\begin{equation}\label{Eq:involutive isometry-pseudofunctions norm}
  \|f^*\|_{\pf_p^*(G)}\leq \|f\|_{\pf_p^*(G)}.
\end{equation}
We obtain the reverse inequality by replacing $f$ with $f^*$ in \eqref{Eq:involutive isometry-pseudofunctions norm}.
\end{proof}


\begin{defn}\label{D:involutive pseudofunctions}
  Let $G$ be a locally compact group, and let $1\leq p \leq\infty$. The Banach $*$-algebra $\pf_p^*(G)$ is defined to be the completion of $(L^1(G),\|\cdot\|_{\pf_p^*(G)},*)$.

\end{defn}
\noindent It is immediate that if $q$ is the conjugate of $p$, then $\pf_p^*(G)=\pf_q^*(G)$ isometrically as Banach $*$-algebras. These algebras have been considered by Kasparov-Yu (see \cite{KY}) and Liao-Yu (see \cite{LY}) in relation to Baum-Connes conjecture.

The next proposition demonstrates the importance of the preceding definition in our context (see also \cite[Proposition 2.4]{LY}). We first recall the following standard result from complex interpolation.

\begin{lem}
	Let $1\leq p_1<p_2< p_3 \leq \infty$. If $T$ is an operator that acts continuously on both $L^{p_1}(G)$ and $L^{p_3}(G)$, then $T$ also acts continuously on $L^{p_2}(G)$. Further,
	\begin{equation}\label{Eq:interpolation-Lp operators}
	\|T\|_{B(L^{p_2}(G))}\leq  \|T\|_{B(L^{p_1}(G))}^{1-\theta} \|T\|_{B(L^{p_3}(G))}^\theta
	\end{equation}
	for $0<\theta<1$ satisfying
	\begin{equation*}\label{Eq:interpolation-Lp operators-relations}
	\frac{1}{p_2}=\frac{1-\theta}{p_1}+\frac{\theta}{p_3}.
	\end{equation*}
\end{lem}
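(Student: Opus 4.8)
The plan is to recognize this as the Riesz--Thorin interpolation theorem specialized to the case in which the source and target exponents coincide, and to organize the argument around Hadamard's three-lines lemma. Write $M_1=\|T\|_{B(L^{p_1}(G))}$ and $M_3=\|T\|_{B(L^{p_3}(G))}$. Since $p_2<\infty$, the integrable simple functions are dense in $L^{p_2}(G)$, so it suffices to prove the estimate $\|Tg\|_{p_2}\le M_1^{1-\theta}M_3^{\theta}\|g\|_{p_2}$ for every such $g$ and then extend by continuity. Every such $g$ lies in $L^{p_1}(G)\cap L^{p_3}(G)$, where the two hypothesized actions of $T$ agree; this compatibility is automatic in our application, since there $T=\lambda_p(f)$ is convolution by a fixed $f\in C_c(G)$ and both actions restrict to the same convolution on the common dense subspace $C_c(G)$. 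Thus $Tg$ is unambiguous.

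First I would fix simple $g$ with $\|g\|_{p_2}\le 1$ and simple $h$ with $\|h\|_{p_2'}\le 1$, where $p_2'$ is conjugate to $p_2$, and form analytic families obtained by varying the exponent. Writing $g=|g|e^{i\arg g}$ and setting $\alpha(z)=\tfrac{1-z}{p_1}+\tfrac{z}{p_3}$, define $g_z=|g|^{p_2\alpha(z)}e^{i\arg g}$, and analogously $h_z=|h|^{p_2'\beta(z)}e^{i\arg h}$ with $\beta(z)=\tfrac{1-z}{p_1'}+\tfrac{z}{p_3'}$. Because $g$ and $h$ are simple, each is a finite sum of terms of the form $c^{z}\chi_E$, so $g_z$ and $h_z$ are entire in $z$; moreover $g_\theta=g$ and $h_\theta=h$ by the defining relation for $\theta$.

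Next I would study $F(z)=\langle Tg_z,h_z\rangle$ on the closed strip $0\le\operatorname{Re}z\le 1$. It is analytic in the interior and bounded and continuous on the strip, the boundedness following from finiteness of the sums defining $g_z,h_z$ together with continuity of $T$ on $L^{p_1}$. On the left edge $\operatorname{Re}z=0$ one has $\|g_{iy}\|_{p_1}\le 1$ and $\|h_{iy}\|_{p_1'}\le 1$, so that $|F(iy)|\le M_1$; on the right edge $\operatorname{Re}z=1$ one similarly gets $\|g_{1+iy}\|_{p_3}\le 1$ and $\|h_{1+iy}\|_{p_3'}\le 1$, hence $|F(1+iy)|\le M_3$. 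The three-lines lemma then yields $|F(\theta)|\le M_1^{1-\theta}M_3^{\theta}$, i.e. $|\langle Tg,h\rangle|\le M_1^{1-\theta}M_3^{\theta}$; taking the supremum over admissible $h$ gives $\|Tg\|_{p_2}\le M_1^{1-\theta}M_3^{\theta}$, as required.

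The one point demanding genuine care --- and the main obstacle --- is the endpoint $p_3=\infty$, where simple functions fail to be norm-dense in $L^{\infty}(G)$, so $T$ on $L^{p_3}(G)$ cannot be recovered as the density-closure of its action on simple functions. This is handled exactly as in the classical theorem: density of simple functions is needed only in $L^{p_2}(G)$, where $p_2<\infty$, while on the $L^{p_3}$ side one uses solely the a priori bound $M_3$; when $p_3=\infty$ the factor $g_{1+iy}$ has modulus $1$ on $\operatorname{supp}g$ and $p_3'=1$, so $\|h_{1+iy}\|_{1}=\|h\|_{p_2'}^{p_2'}\le 1$ and the right-edge estimate $|F(1+iy)|\le M_3$ goes through verbatim. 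I would remark that a fully detailed account of these measure-theoretic and endpoint technicalities is standard and may be found, for instance, in Folland's \emph{Real Analysis}.
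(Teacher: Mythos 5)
Your proof is correct and is precisely the classical Riesz--Thorin three-lines argument that the paper itself invokes without proof when it labels this lemma a ``standard result from complex interpolation,'' so there is nothing to compare beyond noting that you have supplied the canonical proof being cited. Your treatment of the $p_3=\infty$ endpoint is right, and the symmetric endpoint $p_1=1$ (where $p_1'=\infty$, so $\operatorname{Re}\beta(iy)=0$ and $|h_{iy}|=1$ on $\supp h$, giving $\|h_{iy}\|_\infty\le 1$) goes through by the same mechanism, so the sketch is complete modulo the standard measure-theoretic details you correctly defer to the literature.
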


\begin{prop}\label{P:Interpolation triple-pseudofunctions}
	Let $G$ be a locally compact group and $1\leq p_1<p_2<p_3\leq 2$. Then $$(\pf^*_{p_1}(G),\pf^*_{p_2}(G),\pf^*_{p_3}(G))$$ is a spectral interpolation of triple $*$-semisimple Banach $*$-algebras relative to $\pf_{p_1}^*(G)$.
\end{prop}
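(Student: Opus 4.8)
The plan is to check the two defining requirements separately: that $(\pf_{p_1}^*(G),\pf_{p_2}^*(G),\pf_{p_3}^*(G))$ is a nested triple of $*$-semisimple Banach $*$-algebras, and that the spectral radii obey \eqref{Eq:interpolation triple spec raduis relation} relative to $\fS=\pf_{p_1}^*(G)$. Throughout write $q_i$ for the exponent conjugate to $p_i$, so that $2\le q_3<q_2<q_1\le\infty$, and recall $\pf_{p_i}^*(G)=\pf_{q_i}^*(G)$ isometrically. Everything rests on the interpolation lemma (Riesz--Thorin) together with Gelfand's formula $r_\fA(a)=\lim_n\|a^n\|_\fA^{1/n}$.

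First I would establish the nesting. For $f\in L^1(G)$ the exponents $p_2$ and $q_2$ both lie strictly between $p_1$ and $q_1$ (since $p_1<p_2<2\le q_2<q_1$), so the interpolation lemma applied to convolution by $f$ bounds each of $\|\lambda_{p_2}(f)\|_{B(L^{p_2}(G))}$ and $\|\lambda_{q_2}(f)\|_{B(L^{q_2}(G))}$ by a geometric mean of $\|\lambda_{p_1}(f)\|_{B(L^{p_1}(G))}$ and $\|\lambda_{q_1}(f)\|_{B(L^{q_1}(G))}$, hence by $\|f\|_{\pf_{p_1}^*(G)}$. Taking the maximum of the two estimates gives $\|f\|_{\pf_{p_2}^*(G)}\le\|f\|_{\pf_{p_1}^*(G)}$, and the same reasoning (interpolating between $L^{p_2}$ and $L^{q_2}$) gives $\|f\|_{\pf_{p_3}^*(G)}\le\|f\|_{\pf_{p_2}^*(G)}$. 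Since $L^1(G)$ is dense in each completion by definition, these are dense contractive $*$-embeddings. For $*$-semisimplicity I would use that $\|f\|_{\pf_2^*(G)}=\|f\|_{C^*_r(G)}\le\|f\|_{\pf_{p_i}^*(G)}$, so the identity on $L^1(G)$ extends to a contractive $*$-homomorphism $\Phi_i\colon\pf_{p_i}^*(G)\to C^*_r(G)$; as the regular representation is faithful on $C^*_r(G)$, the reducing ideal of $\pf_{p_i}^*(G)$ lies in $\ker\Phi_i$, so it suffices to prove $\Phi_i$ injective. This is the step demanding the most care: given $a=\lim_n f_n$ with $f_n\in L^1(G)$ and $\Phi_i(a)=0$ (i.e.\ $\lambda_2(f_n)\to 0$ in $B(L^2(G))$), one tests against $g\in C_c(G)$ and observes that $f_n*g$ converges to $\lambda_{p_i}(a)g$ in $L^{p_i}(G)$ but to $0$ in $L^2(G)$; extracting a subsequence converging almost everywhere forces $\lambda_{p_i}(a)g=0$, and density of $C_c(G)$ gives $\lambda_{p_i}(a)=0$, with the same argument yielding $\lambda_{q_i}(a)=0$. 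Since $f\mapsto(\lambda_{p_i}(f),\lambda_{q_i}(f))$ is isometric for $\|\cdot\|_{\pf_{p_i}^*(G)}$, this forces $a=0$.

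The heart of the matter is the interpolation relation, which I would deduce from a single norm inequality valid for every $b\in\pf_{p_1}^*(G)$:
\begin{equation*}
\|b\|_{\pf_{p_2}^*(G)}\le\|b\|_{\pf_{p_1}^*(G)}^{1-\theta}\,\|b\|_{\pf_{p_3}^*(G)}^{\theta},
\end{equation*}
where $\theta\in(0,1)$ is fixed by $\tfrac1{p_2}=\tfrac{1-\theta}{p_1}+\tfrac{\theta}{p_3}$. On the dense subalgebra $L^1(G)$ the interpolation lemma bounds $\|\lambda_{p_2}(b)\|_{B(L^{p_2}(G))}$ by $\|\lambda_{p_1}(b)\|_{B(L^{p_1}(G))}^{1-\theta}\|\lambda_{p_3}(b)\|_{B(L^{p_3}(G))}^{\theta}$; and taking conjugates in the relation defining $\theta$ gives precisely $\tfrac1{q_2}=(1-\theta)\tfrac1{q_1}+\theta\tfrac1{q_3}$, so the same lemma bounds $\|\lambda_{q_2}(b)\|_{B(L^{q_2}(G))}$ by $\|\lambda_{q_1}(b)\|_{B(L^{q_1}(G))}^{1-\theta}\|\lambda_{q_3}(b)\|_{B(L^{q_3}(G))}^{\theta}$. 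Bounding each factor by the corresponding $\pf^*$-norm and taking the maximum of the two estimates yields the displayed inequality on $L^1(G)$, and it extends to all of $\pf_{p_1}^*(G)$ by density and continuity of the three norms.

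Finally, applying the norm inequality to $b=a^n$ for $a\in\fS_h$, taking $n$-th roots, and letting $n\to\infty$ with Gelfand's formula for each of the three spectral radii gives
\begin{equation*}
r_{\pf_{p_2}^*(G)}(a)\le r_{\pf_{p_1}^*(G)}(a)^{1-\theta}\,r_{\pf_{p_3}^*(G)}(a)^{\theta},
\end{equation*}
which is exactly \eqref{Eq:interpolation triple spec raduis relation}; note this in fact holds for every $a$, not just for $a\in\fS_h$. The genuine obstacle is thus the injectivity of $\Phi_i$ underlying $*$-semisimplicity; once the nesting is in place, the interpolation inequality is a direct consequence of Riesz--Thorin and the spectral radius formula, the only delicate bookkeeping being that the conjugate-exponent relation produces the \emph{same} exponent $\theta$ for both halves of the $\pf^*$-norm.
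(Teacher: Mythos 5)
Your proof is correct and follows essentially the same route as the paper's: the Riesz--Thorin norm inequality on $L^1(G)$ with the same exponent $\theta$ serving both the $p$-side and the conjugate $q$-side, injectivity of the contraction into $C^*_r(G)$ proved by testing against $C_c(G)$ and comparing limits in measure via the isometric representation on $L^{p}(G)\oplus^\infty L^{q}(G)$, $*$-semisimplicity inherited from the resulting faithful Hilbert-space representation, and the powers-and-$n$th-roots passage from the norm inequality to the spectral-radius inequality (which the paper leaves implicit and you rightly make explicit). The only point needing a one-line patch is the endpoint $p_1=1$, where $q_1=\infty$ and $C_c(G)$ is not norm-dense in $L^\infty(G)$, so your density argument for $\lambda_{q_1}(a)=0$ breaks; but there $\|f\|_{\pf_1^*(G)}=\|f\|_{L^1(G)}$, hence $\pf_1^*(G)=L^1(G)$ and $\lambda_1(a)=0$ alone forces $a=0$ --- the paper sidesteps the same point by simply invoking the known $*$-semisimplicity of $L^1(G)$.
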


\begin{proof}
	Let $q_1$, $q_2$, and $q_3$ be the conjugates of $p_1$, $p_2$, $p_3$, and choose $\theta\in (0,1)$ such that
	$$\frac{1}{p_2}=\frac{1-\theta}{p_1}+\frac{\theta}{p_3}.$$ Then, for $f\in L^1(G)$,
	\begin{eqnarray*}
		\|\lambda_{p_2}(f)\|_{B(L^{p_2}(G))} &\leq & \|\lambda_{p_1}(f)\|_{B(L^{p_1}(G))}^{1-\theta}\|\lambda_{p_1}(f)\|_{B(L^{p_3}(G))}^\theta\\
		&\leq &\|f\|_{\pf_{p_1}^*(G)}^{1-\theta}\|f\|_{\pf_{p_3}^*(G)}^{\theta}
	\end{eqnarray*}
	by Equation \eqref{Eq:interpolation-Lp operators}.
	Similarly,
	$$\|\lambda_{q_2}(f)\|_{B(L^{q_2}(G))}\leq \|f\|_{\pf_{p_1}^*(G)}^{1-\theta}\|f\|_{\pf_{p_3}^*(G)}^{\theta}$$
	since
	\begin{eqnarray*}
	\frac{1}{q_2}&=& 1-\frac{1}{p_2}\\
	&=& 1-\frac{1-\theta}{p_1}-\frac{\theta}{p_3}\\
	&=& 1-(1-\theta)\left(1-\frac{1}{q_1}\right)-\theta \left(1-\frac{1}{q_3}\right)\\
	&=& \frac{1-\theta}{q_1}+\frac{\theta}{q_3}.
	\end{eqnarray*}
	Hence,
	\begin{equation}\label{E:PF-interpolation}
		\|f\|_{\pf_{p_2}^*(G)}\leq \|f\|_{\pf_{p_1}^*(G)}^{1-\theta}\|f\|_{\pf_{p_3}^*(G)}^{\theta}
	\end{equation}
	for every $f\in L^1(G)$. As such, it suffices to show $\pf^*_{p_1}(G)\subseteq \pf^*_{p_2}(G)\subseteq \pf^*_{p_3}(G))$ is a nested triple of $*$-semisimple $*$-subalgebras.
	
	Let $1< p<2$ be arbitrary. Then, for $f\in L^1(G)$,
	\begin{eqnarray*}
		\|f\|_{\pf_p^*(G)} &=& \max \{\|\lambda_p(f)\|_{B(L^p(G))},\|\lambda_q(f)\|_{B(L^q(G))} \} \\ &\geq & \|\lambda_p(f)\|_{B(L^{p}(G))}^{1-\theta}
		\|\lambda_q(f)\|_{B(L^{q}(G))}^\theta \\
		&\geq&  \|\lambda_2(f)\|_{B(L^2(G))}\\
		&=& \|f\|_{C^*_r(G)}
	\end{eqnarray*}
	by Equation \eqref{Eq:interpolation-Lp operators}, where $q$ is the conjugate of $p$ and $0<\theta<1$ satisfies
	\begin{equation*}
	\frac{1}{2}=\frac{1-\theta}{p}+\frac{\theta}{q}.
	\end{equation*}
	So the identity map on $L^1(G)$ extends to a contraction $\pi_p : \pf_p^*(G)\to C^*_r(G)$. Furthermore, this map is injective since $C_c(G)$ is dense in $L^p(G)$, $L^q(G)$, and $L^2(G)$. Indeed, suppose that $T\in \ker \pi_p$. Choose a sequence $\{f_n\}\subset L^1(G)$ so that $\lim_{n\to \infty} f_n=T$ in $\pf_p^*(G)$. Since $T\in \ker \pi_p$, for every $g\in C_c(G)$, we have $\lim_{n\to \infty} f_n*g=0$ in $L^2(G)$. In particular, $f_n*g$ converges to 0 in measure for every $g\in C_c(G)$. Observe that the map
$$L^1(G)\to B(L^p(G) \oplus^\infty L^q(G)) \ \ , \ \ f\mapsto \lambda_p(f)\oplus \lambda_q(f)$$
extends to an isometric representation $\pf_p^*(G)\to B(L^p(G)\oplus^\infty L^q(G))$. As such, we view $\pf_p^*(G)$ as being contained in $B(L^p(G)\oplus^\infty L^q(G))$.
Considering $g_1$ and $g_2$ in $C_c(G)$, we then have
$$T(g_1,g_2)=(h_1,h_2),$$
where $h_1$ and $h_2$ are the limits of $f_n*g_1$ and $f_n*g_2$ in $L^p(G)$ and $L^q(G)$, respectively. Since both of these exist and converge to zero in measure, $h_1=0$ in $L^p(G)$ and $h_2=0$ in $L^q(G)$. Thus $T=0$ by norm density of $C_c(G)\oplus C_c(G)$ in $L^p(G)\oplus^\infty L^q(G)$.


	Now let $1\leq p<p'<2$. Substituting $(p,p',2)$ for $(p_1,p_2,p_3)$ in Equation \eqref{E:PF-interpolation} gives
	$$\|f\|_{\pf^*_{p'}(G)}\leq \|f\|_{\pf_{p}^*(G)}^{1-\theta}\|f\|_{C^*_r(G)}^\theta\leq \|f\|_{\pf_p^*(G)}^{1-\theta}\|f\|_{\pf_p^*(G)}^{\theta}=\|f\|_{\pf_p^*(G)}.$$
	Hence, the identity map on $L^1(G)$ extends to a contraction $\pf_p^*(G)\to \pf_{p'}^*(G)$ for $1\leq p< p'\leq 2$. This map is also injective by injectivity of $\pi_p$ and commutativity of the following diagram.
$$\xymatrix{
  & \pf_p^*(G) \ar[dl] \ar[dr]^{\pi_p}
  \ar@{}[d]|-{\circlearrowleft} \\
  \pf_{p'}^*(G) \ar[rr]_{\pi_{p'}}
  && C^*_r(G)
}
$$
Finally we note that since $L^1(G)$ and $C^*_r(G)$ are already known to be $*$-semisimple, we deduce $\pf_p^*(G)$ is $*$-semisimple for $1\leq p\leq 2$.
 So,  we conclude
	$$(\pf^*_{p_1}(G),\pf^*_{p_2}(G),\pf^*_{p_3}(G))$$ is a spectral interpolation of triple $*$-semisimple Banach $*$-algebras relative to $\pf_{p_1}(G)$ for all $1\leq p_1<p_2<p_3\leq 2$.
\end{proof}

\begin{cor}\label{C:Interpolation triple-pseudofunctions-L1 case}
	If $1<p<2$, then
	$$ (L^1(G),\pf_p^*(G),C^*_r(G))$$
	is a spectral interpolation of triple $*$-semisimple Banach $*$-algebras relative to $L^1(G)$.
\end{cor}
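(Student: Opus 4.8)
The plan is to deduce this corollary as essentially a special case of Proposition \ref{P:Interpolation triple-pseudofunctions}, using $L^1(G) = \pf_1^*(G)$ and $C^*_r(G) = \pf_2^*(G)$ to fill in the two endpoints of the family $\{\pf_p^*(G)\}_{1 \leq p \leq 2}$. First I would verify these two identifications. For $p=1$, the conjugate exponent is $q=\infty$, and the norm $\|f\|_{\pf_1^*(G)} = \max\{\|\lambda_1(f)\|_{B(L^1(G))}, \|\lambda_\infty(f)\|_{B(L^\infty(G))}\}$ should collapse to $\|f\|_1$: the operator $\lambda_1(f)$ of convolution by $f$ on $L^1(G)$ has norm exactly $\|f\|_1$ (achieved by approximate identities), and $\|\lambda_\infty(f)\| \leq \|f\|_1$, so the max equals $\|f\|_1$ and the completion is just $L^1(G)$. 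For $p=2$ we already have $\pf_2(G) = C^*_r(G)$ noted in the text, and since $p=q=2$ the involutive norm $\|\cdot\|_{\pf_2^*(G)}$ coincides with the single operator norm on $B(L^2(G))$, giving $\pf_2^*(G) = C^*_r(G)$.

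Having made these identifications, the corollary follows by taking $(p_1, p_2, p_3) = (1, p, 2)$ in Proposition \ref{P:Interpolation triple-pseudofunctions}. That proposition requires $1 \leq p_1 < p_2 < p_3 \leq 2$, which is exactly satisfied by $(1, p, 2)$ when $1 < p < 2$. The proposition then directly asserts that $(\pf_1^*(G), \pf_p^*(G), \pf_2^*(G))$ is a spectral interpolation of triple $*$-semisimple Banach $*$-algebras relative to $\pf_1^*(G)$, and rewriting the endpoints as $L^1(G)$ and $C^*_r(G)$ yields the claim.

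The one point requiring genuine care — and what I expect to be the main (though still minor) obstacle — is confirming that the interpolation inequality \eqref{Eq:interpolation triple spec raduis relation} and the nesting/$*$-semisimplicity conclusions of Proposition \ref{P:Interpolation triple-pseudofunctions} survive the boundary substitution $p_1 = 1$. The proof of the proposition treats the interval $[1,2]$ uniformly: the interpolation estimate \eqref{E:PF-interpolation} is established for all $f \in L^1(G)$ with $1 \leq p_1 < p_2 < p_3 \leq 2$, the injectivity argument for $\pi_p : \pf_p^*(G) \to C^*_r(G)$ and the contractive inclusions $\pf_p^*(G) \to \pf_{p'}^*(G)$ are proved for $1 \leq p < p' \leq 2$, and $*$-semisimplicity is concluded for $1 \leq p \leq 2$. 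Thus the endpoint $p_1 = 1$ is already covered and no degeneration occurs; I would simply remark that the proof of Proposition \ref{P:Interpolation triple-pseudofunctions} applies verbatim with $p_1 = 1$, $p_2 = p$, $p_3 = 2$, together with the isometric identifications $\pf_1^*(G) = L^1(G)$ and $\pf_2^*(G) = C^*_r(G)$.
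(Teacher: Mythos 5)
Your proposal is correct and is essentially the paper's own (implicit) argument: the corollary is stated there without proof precisely because it is the special case $(p_1,p_2,p_3)=(1,p,2)$ of Proposition \ref{P:Interpolation triple-pseudofunctions}, whose proof covers the endpoint $p_1=1$ uniformly, combined with the identifications $\pf_1^*(G)=L^1(G)$ and $\pf_2^*(G)=C^*_r(G)$. Your verification of these identifications (isometry of $\lambda_1$ via a bounded approximate identity, and the collapse of the max-norm to $\|\lambda_2(\cdot)\|_{B(L^2(G))}$ when $p=q=2$) is exactly the routine checking the paper leaves to the reader.
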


Recall that if $G$ is a locally compact group with left haar measure $\nu$, then
$$\nu_{S}:=\lim_{n\to \infty} \nu(S^n)^\frac{1}{n}$$
exists and belongs to the interval $[1,\infty)$ for every nonempty, open, pre-compact subset $S$ of $G$ (see \cite[Theorem 1.5]{Palma1}). A locally compact group $G$ has subexponential growth exactly when $\nu_S=1$ for every such subset $S$ of $G$.

We are now equipped to prove the main theorem of this paper.

\begin{thm}\label{T:quasi-Hermitian implies amenable}
The following are equivalent for a locally compact group $G$.\\
$(i)$ $G$ is quasi-symmetric;\\
$(ii)$ $G$ is quasi-Hermitian;\\
$(iii)$ $r_{L^1(G)}(f)=r_{C^*_r(G)}(f)$ for every $f\in C_c(G)_h$.\\
$(iv)$ $\Sp_{L^1(G)}(f)=\Sp_{C^*_r(G)}(f)$ for every $f\in C_c(G)$.
\end{thm}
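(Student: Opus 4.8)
The plan is to run the cycle $(i)\Rightarrow(ii)\Rightarrow(iii)\Rightarrow(iv)\Rightarrow(i)$ and then read off amenability from $(iii)$. Three of the four implications are soft. For $(i)\Rightarrow(ii)$ I would repeat the ``easy half'' of Shirali--Ford already invoked before Proposition \ref{P:quasi Hermitian groups-exponential growth}: if $f\in C_c(G)_h$ then $f^{\ast 2}=f^{*}\ast f$, so quasi-symmetry gives $\Sp_{L^1(G)}(f^{*}\ast f)\subseteq[0,\infty)$, and spectral mapping forces $\{\alpha^2:\alpha\in\Sp_{L^1(G)}(f)\}\subseteq[0,\infty)$, hence $\Sp_{L^1(G)}(f)\subseteq\R$. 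For $(iv)\Rightarrow(i)$ observe that $f^{*}\ast f\in C_c(G)$ is a positive element of the C$^*$-algebra $C^*_r(G)$, so $\Sp_{L^1(G)}(f^{*}\ast f)=\Sp_{C^*_r(G)}(f^{*}\ast f)\subseteq[0,\infty)$. For $(iii)\Rightarrow(iv)$ I would apply the Barnes--Hulanicki Theorem (Theorem \ref{T:Barnes Theorem}) with $\fA=L^1(G)$, $\fS=C_c(G)$, and the faithful representation $\lambda_2\colon L^1(G)\to B(L^2(G))$: since $\|\lambda_2(f)\|=r_{C^*_r(G)}(f)$ for Hermitian $f$, hypothesis $(iii)$ is exactly $r_{L^1(G)}(f)=\|\lambda_2(f)\|$ on $C_c(G)_h$, and the theorem upgrades this to $\Sp_{L^1(G)}(f)=\Sp_{C^*_r(G)}(f)$ for all $f\in C_c(G)$ (using spectral permanence of $C^*_r(G)$ inside $B(L^2(G))$).

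All the substance lies in $(ii)\Rightarrow(iii)$, i.e. in proving $r_{L^1(G)}(f)=r_{C^*_r(G)}(f)$ for $f\in C_c(G)_h$; the nontrivial direction is $r_{L^1(G)}(f)\le r_{C^*_r(G)}(f)$, since the reverse is automatic from $\|\cdot\|_{C^*_r(G)}\le\|\cdot\|_{\pf_p^*(G)}\le\|\cdot\|_1$. The engine is the family $\pf_p^*(G)$. By Corollary \ref{C:Interpolation triple-pseudofunctions-L1 case}, for each $1<p<2$ the triple $(L^1(G),\pf_p^*(G),C^*_r(G))$ is a spectral interpolation of triple $*$-semisimple Banach $*$-algebras relative to $L^1(G)$, hence \emph{a fortiori} relative to $C_c(G)$. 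Quasi-Hermitianity of $G$ says precisely that $C_c(G)$ is a quasi-Hermitian dense $*$-subalgebra of $L^1(G)$ (Definitions \ref{D:qH-group} and \ref{D:quasi-Hermitian Ban alg}), so Theorem \ref{T:interpolation triple-general alg} applies and yields $r_{\pf_p^*(G)}(f)=r_{C^*_r(G)}(f)$ for every $f\in C_c(G)_h$ and every $1<p<2$.

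It then remains to pass from $\pf_p^*(G)$ to $L^1(G)$ as $p\to1^+$, which I would do via a support/growth comparison of the two norms on compactly supported functions. In the discrete case this is transparent: for finitely supported $g$, Hölder gives $\|g\|_1\le(\#(\supp g))^{1-1/p}\|g\|_p$, and testing $\lambda_p(g)$ on $\delta_e$ gives $\|g\|_p\le\|\lambda_p(g)\|_{B(\ell^p(G))}\le\|g\|_{\pf_p^*(G)}$, whence $\|g\|_1\le(\#(\supp g))^{1-1/p}\|g\|_{\pf_p^*(G)}$. Applying this to $g=f^{\ast n}$, whose support lies in $S^n$ for a precompact open $S\supseteq\supp f$, taking $n$-th roots and letting $n\to\infty$ gives
\[
r_{L^1(G)}(f)\ \le\ \nu_S^{\,1-1/p}\,r_{\pf_p^*(G)}(f)\ =\ \nu_S^{\,1-1/p}\,r_{C^*_r(G)}(f),
\]
where $\nu_S=\lim_n\nu(S^n)^{1/n}\in[1,\infty)$ is the finite growth constant recalled above from \cite{Palma1}. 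Since $1-1/p\to0$ as $p\to1^+$ and $\nu_S<\infty$, the factor $\nu_S^{\,1-1/p}\to1$, giving $r_{L^1(G)}(f)\le r_{C^*_r(G)}(f)$ and hence equality, which is $(iii)$. Amenability then follows immediately: applying Proposition \ref{P:dense Spec subalg-same C envelope} to $C_c(G)\subseteq L^1(G)\subseteq C^*_r(G)$, condition $(iii)$ forces $C^*_r(G)$ to be the C$^*$-envelope of $L^1(G)$, i.e. $C^*(G)=C^*_r(G)$, so $G$ is amenable.

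The crux, and the step I expect to be the main obstacle, is the locally compact version of this support estimate. The clean discrete proof uses the point mass $\delta_e\in\ell^p(G)$, which is unavailable for non-discrete $G$ (indeed $\|g\|_p\le\|g\|_{\pf_p^*(G)}$ fails there, as normalized bumps show), so one needs a genuinely measure-theoretic substitute bounding $\|f^{\ast n}\|_1$ by $\nu(S^n)^{1-1/p}\|f^{\ast n}\|_{\pf_p^*(G)}$, or by a comparable subexponential factor, uniformly enough to survive the double limit. I want to emphasize why the two limits cannot be taken softly: the interpolation inequality \eqref{Eq:interpolation triple spec raduis relation} combined with $r_{\pf_p^*(G)}=r_{C^*_r(G)}$ only reproduces the \emph{trivial} inequality $r_{C^*_r(G)}(f)\le r_{L^1(G)}(f)$, and a minimax exchange of $\inf_n$ with $\sup_p$ likewise yields only that direction. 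The decisive mechanism is instead that the comparison constant between $\|f^{\ast n}\|_1$ and $\|f^{\ast n}\|_{\pf_p^*(G)}$ grows only subexponentially in $n$ precisely because $\nu_S$ is finite, which is exactly what allows $\nu_S^{\,1-1/p}\to1$ to recover the correct inequality in the limit $p\to1^+$.
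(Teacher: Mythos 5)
Your architecture coincides with the paper's: the same cycle, with the three soft implications handled identically (the easy half of Shirali--Ford for $(i)\Rightarrow(ii)$, Barnes--Hulanicki (Theorem \ref{T:Barnes Theorem}) applied to $\lambda_2$ together with spectral permanence for $(iii)\Rightarrow(iv)$, positivity of $f^**f$ in $C^*_r(G)$ for $(iv)\Rightarrow(i)$), and for $(ii)\Rightarrow(iii)$ the same engine: Corollary \ref{C:Interpolation triple-pseudofunctions-L1 case} plus Theorem \ref{T:interpolation triple-general alg} yield $r_{\pf_p^*(G)}(f)=r_{C^*_r(G)}(f)$ for $f\in C_c(G)_h$ and $1<p<2$, after which one needs $r_{L^1(G)}(f)\le \nu_S^{1/q}\,r_{\pf_p^*(G)}(f)$ and the limit $p\to 1^+$ (your exponent $1-1/p$ is the same as $1/q$). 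However, there is a genuine gap exactly where you flagged it: you establish the growth estimate only for discrete groups, via the vector $\delta_e$, and you correctly note that the intermediate inequality $\|g\|_p\le\|g\|_{\pf_p^*(G)}$ fails for non-discrete $G$. Since the theorem concerns arbitrary locally compact groups, your proposal as written does not prove $(ii)\Rightarrow(iii)$ in general, and this is the load-bearing step of the whole theorem.

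The repair is a one-line trick, using only tools already in your argument: do not attempt to dominate $\|f^{*n}\|_{L^p}$ by the $\pf_p^*$-norm of $f^{*n}$; instead peel off one convolution factor and use $f$ itself as the $L^p$ test vector. Writing $f^{*n}=f^{*(n-1)}*f=\lambda_p(f^{*(n-1)})f$ and noting $f\in C_c(G)\subseteq L^p(G)$, H\"older gives
\[
\|f^{*n}\|_{L^1(G)}=\|f^{*n}1_{S^n}\|_{L^1(G)}\le \|f^{*n}\|_{L^p(G)}\,\nu(S^n)^{1/q}\le \|f^{*(n-1)}\|_{\pf_p^*(G)}\,\|f\|_{L^p(G)}\,\nu(S^n)^{1/q},
\]
since $\|\lambda_p(h)\|_{B(L^p(G))}\le\|h\|_{\pf_p^*(G)}$ by definition of the $\pf_p^*$-norm. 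The factor $\|f\|_{L^p(G)}$ is independent of $n$ and disappears upon taking $n$-th roots, so the spectral radius formula gives $r_{L^1(G)}(f)\le r_{\pf_p^*(G)}(f)\,\nu_S^{1/q}$ for every locally compact $G$; this is precisely how the paper argues, and your discrete computation is the special case in which $\delta_e$ can play the role of $f$. Your worry about the order of limits is then moot: the estimate holds for each fixed $p$, one first lets $n\to\infty$ and then $q\to\infty$, exactly as in your discrete sketch. With this inserted, the remainder of your argument --- including the deduction of amenability from $(iii)$ via Proposition \ref{P:dense Spec subalg-same C envelope}, which forces $C^*_r(G)$ to be the C$^*$-envelope of $L^1(G)$ --- is correct and matches the paper's Corollary \ref{C:MainCor}.
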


\begin{proof}
	As previously mentioned, the implication $(i)$ $\Longrightarrow$ $(ii)$ is given by the proof of Proposition \ref{T:Hermitian vs quasi Hermitian-comm alg} $(iii)$ $\Longrightarrow$ $(i)$. Further, $(iii)$ $\Longrightarrow$ $(iv)$ is a special case of the Barnes-Hulanicki Theorem (see Theorem \ref{T:Barnes Theorem}), and $(iii)$ $\Longrightarrow$ $(iv)$ is clear. We now must prove $(ii)$ $\Longrightarrow$ $(iii)$.

  Suppose $G$ is quasi-Hermitian and $1<p<2$. Then $(L^1(G),\pf_p^*(G),C^*_r(G))$ is a spectral interpolation of triple $*$-semisimple Banach $*$-algebras relative to $L^1(G)$ by the previous Corollary.
  Hence, Theorem \ref{T:interpolation triple-general alg} implies $C_c(G)_h$ has invariant spectral radius in $(\pf_p^*(G),C^*_r(G))$, i.e.
  \begin{equation}\label{Eq:spec. radious-pseudo vs reduced alg}
  r_{\pf_p^*(G)}(f)=r_{C^*_r(G)}(f) \ \ \ (f^*=f\in C_c(G)_h).
\end{equation}
Now suppose $f\in C_c(G)_h$ and $S$ is a pre-compact, open, symmetric subset of $G$ with $\supp f\subseteq S$. We will write $f^n$ to denote $\underbrace{f*\cdots*f}_{n\text{ times}}$ for $f\in L^1(G)$. For every $n\in \N$, we have
\begin{eqnarray*}
  \|f^{n}\|_{L^1(G)} &=& \|f^{n} 1_{S^n}\|_{L^1(G)} \\
  &\leq & \|f^{n}\|_{L^p(G)} \|1_{S^n}\|_{L^q(G)}  \\
   &\leq &  \|f^{n-1}\|_{B(L^p(G))}\|f\|_{L^p(G)} \nu(S^n)^{1/q}  \\
   &=&  \|f^{n-1}\|_{\pf_p^*(G)}\|f\|_{L^p(G)} \nu(S^n)^{1/q},
\end{eqnarray*}
where $\nu$ denotes the Haar measure of $G$.

Thus, condition \eqref{Eq:spec. radious-pseudo vs reduced alg} in conjunction with taking the $n^{th}$ root of both sides as $n$ tends towards infinity implies
\begin{equation*}
r_{L^1(G)}(f)\leq  r_{\pf_p^*(G)}(f)\nu_S^{1/q}=r_{C_r^*(G)}(f) \nu_S^{1/q}
\end{equation*}
for $f\in C_c(G)_h$ by the spectral radius formula.
Taking limits as $q\to \infty$ (i.e., $p\to 1^+$), the preceding equation becomes
\begin{equation*}
r_{L^1(G)}(f)\leq  r_{C_r^*(G)}(f) \lim_{q\to \infty} \nu_S^{1/q}=r_{C_r^*(G)}(f)\leq r_{L^1(G)}(f).
\end{equation*}
Thus
\begin{equation*}
r_{L^1(G)}(f)=r_{C_r^*(G)}(f)
\end{equation*}
for every $f\in C_c(G)_h$.
\end{proof}

\begin{cor}\label{C:MainCor}
	A quasi-Hermitian locally compact group is amenable.
\end{cor}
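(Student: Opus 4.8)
The plan is to read off amenability directly from Theorem~\ref{T:quasi-Hermitian implies amenable}, combined with the classical fact that a locally compact group $G$ is amenable precisely when the canonical surjection $C^*(G)\to C^*_r(G)$ is a $*$-isomorphism. Since $C^*(G)$ is by definition the enveloping C*-algebra of $L^1(G)$, it suffices to show that $C^*_r(G)$ is itself the C*-envelope of $L^1(G)$.

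To produce this identification I would invoke Proposition~\ref{P:dense Spec subalg-same C envelope} with $\fA=L^1(G)$, $\fB=C^*_r(G)$, and $\fS=C_c(G)$. First I would verify the structural hypotheses: the left-regular representation $\lambda_2$ embeds $L^1(G)$ continuously, injectively, and with dense range into $C^*_r(G)$ as a $*$-subalgebra, both algebras are $*$-semisimple, and $C_c(G)$ is a dense $*$-subalgebra of $L^1(G)$; hence $L^1(G)\subseteq C^*_r(G)$ is a nested pair of $*$-semisimple Banach $*$-algebras. The one substantive hypothesis is that $C_c(G)_h$ has invariant spectral radius in $(L^1(G),C^*_r(G))$, that is, $r_{L^1(G)}(f)=r_{C^*_r(G)}(f)$ for every $f\in C_c(G)_h$. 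Under the standing assumption that $G$ is quasi-Hermitian, this is exactly the implication $(ii)\Rightarrow(iii)$ of Theorem~\ref{T:quasi-Hermitian implies amenable}.

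With these hypotheses in hand, Proposition~\ref{P:dense Spec subalg-same C envelope} gives that $C^*_r(G)$ is the C*-envelope of $L^1(G)$, i.e. $C^*(G)=C^*_r(G)$ canonically, and amenability of $G$ follows from the characterization quoted above. I expect essentially all of the difficulty to have been absorbed into Theorem~\ref{T:quasi-Hermitian implies amenable}; the only delicate point remaining is that invariant spectral radius is asserted merely on the dense Hermitian set $C_c(G)_h$ rather than on all of $L^1(G)_h$. This is precisely the situation Proposition~\ref{P:dense Spec subalg-same C envelope} is designed to handle, as its proof applies the estimate $\|\pi_u(a)\|^2=r_{L^1(G)}(a^*a)=r_{C^*_r(G)}(a^*a)\le\|a\|_{C^*_r(G)}^2$ only for $a\in\fS$ and then extends by density; thus no separate bootstrapping from Hermitian elements to general elements of $C_c(G)$ is required.
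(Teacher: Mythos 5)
Your proposal is correct and follows exactly the paper's own route: it combines the implication $(ii)\Rightarrow(iii)$ of Theorem~\ref{T:quasi-Hermitian implies amenable} with Proposition~\ref{P:dense Spec subalg-same C envelope} (applied to $\fA=L^1(G)$, $\fB=C^*_r(G)$, $\fS=C_c(G)$) to identify $C^*_r(G)$ as the C$^*$-envelope of $L^1(G)$, and then concludes amenability from the standard characterization $C^*(G)=C^*_r(G)$. Your additional verification of the hypotheses of Proposition~\ref{P:dense Spec subalg-same C envelope}, including the observation that invariant spectral radius is only needed on the dense Hermitian set $C_c(G)_h$, is accurate and merely makes explicit what the paper leaves implicit.
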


\begin{proof}
	If $G$ is quasi-Hermitian, then $C_c(G)_h$ has invariant spectral radius in $(L^1(G),C^*_r(G))$ so that, by Proposition \ref{P:dense Spec subalg-same C envelope}, $C^*_r(G)$ is the C$^*$-envelope of $L^1(G)$. Hence, $G$ is amenable.
\end{proof}


\begin{rem}\label{R:free sub semigroup-not qH}
	Let $G$ be a discrete group containing a free sub-semigroup on two generators.
	Jenkins proved $G$ is not Hermitian by exhibiting a function $f\in c_c(G)_h$ on the group with $\Sp_{L^1(G)}(f)\not\subseteq\R$ (see \cite{Jenk1}). In particular, $G$ is not quasi-Hermitian. However, Jenkins' proof was long and complicated. Palmer gave a much simpler proof of Jenkins' result that $G$ is not Hermitian by appealing to properties of the spectral radius for C*-algebras (see \cite[Theorem 12.5.18 (f)]{Pal2}). We now give a short verification that $G$ is not quasi-Hermitian by appealing to Theorem \ref{T:quasi-Hermitian implies amenable} and some of Palmer's observations. We reproduce most of Palmer's argument for the sake of self-containment.
	
	Suppose $s,t\in G$ generate a free semigroup,  $a_0,a_1,a_2\in\C$ satisfy $|a_0|=|a_1|=|a_2|=\frac{1}{3}$ and
	$$\sup\{|a_0+a_1z+a_2z^2| : z\in\T\}<1.$$
	Then $r_{\ell^1(G)}(a_0\delta_e+a_1\delta_s+a_2\delta_{s^2})<1$ by the spectral mapping theorem and maximum modulus principle. Since $a_0\delta_e+a_1\delta_s+a_2\delta_{s^2}$ is Hermitian, we deduce
	$$ \|a_0\delta_e+a_1\delta_s+a_2\delta_{s^2}\|_{C^*_r(G)}=r_{C^*_r(G)}(a_0\delta_e+a_1\delta_s+a_2\delta_{s^2})<1.$$
	So
	$$\|(a_0\delta_e+a_1\delta_s+a_2\delta_{s^2})*\delta_t\|_{C^*_r(G)}=\|a_0\delta_t+a_1\delta_{st}+a_2\delta_{s^2t}\|_{C^*_r(G)}<1$$
	since $\delta_t$ is a unitary in $C^*_r(G)$. When we expand the $n$-fold convolution product
	$$ (a_0\delta_t+a_1\delta_{st}+a_2\delta_{st^2})^n,$$
	none of the $3^n$ terms coincide as $s$ and $t$ generate a free semigroup. Hence,
	$$\|(a_0\delta_t+a_1\delta_{st}+a_2\delta_{st^2})^n\|_{\ell^1(G)}=1$$
	for every $n\in \N$ and, so, $r_{\ell^1(G)}(a_0\delta_t+a_1\delta_{st}+a_2\delta_{st^2})=1$. Thus,
	$$\Sp_{\ell^1(G)}(a_0\delta_t+a_1\delta_{st}+a_2\delta_{st^2})\neq \Sp_{C^*_r(G)}(a_0\delta_t+a_1\delta_{st}+a_2\delta_{st^2})$$
	and, hence, $G$ is not quasi-Hermitian by Theorem \ref{T:quasi-Hermitian implies amenable} $(iv)$.
\end{rem}

\begin{rem}
	A locally compact group $G$ is quasi-Hermitian if and only if every compactly generated open subgroup of $G$ is quasi-Hermitian. Indeed, if $H$ is a compactly generated subgroup of a quasi-Hermitian group $G$, then $H$ is quasi-Hermitian by Theorem \ref{T:quasi-Hermitian implies amenable} $(iii)$ since $L^1(H)$ embeds isometrically into $L^1(G)$ and $C^*_r(H)$ embeds isometrically into $C^*_r(G)$. Now suppose every compactly generated subgroup of $G$ is quasi-Hermitian and $f\in C_c(G)_h$ is nonzero. Let $H$ be the open subgroup generated by the support of $f$. Then
	$$ \Sp_{L^1(G)}(f)\subseteq \Sp_{L^1(H)}(f)\subseteq\R.$$
	So $G$ is quasi-Hermitian.
	
	Note the above equivalence is not true if we replace the condition ``quasi-Hermitian'' with ``Hermitian'' since there exist locally finite non-Hermitian groups (see Example \ref{E:Hermitian- weighted locally finite alg}).
\end{rem}

As a consequence of the above remarks, we arrive at the following characterization amongst elementary amenable discrete groups.

\begin{cor}\label{C:qH-elementary amenable}
	An elementary amenable discrete group $G$ is quasi-Hermitian if and only if it is of subexponential growth.
\end{cor}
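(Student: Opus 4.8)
The plan is to establish the two implications separately, the substance lying entirely in one of them. The implication that subexponential growth yields quasi-Hermitianity is immediate: it is precisely Proposition \ref{P:quasi Hermitian groups-exponential growth}, and it needs neither discreteness nor elementary amenability. So the whole content is the converse, which I would prove in contrapositive form: an elementary amenable discrete group of exponential growth fails to be quasi-Hermitian.

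First I would reduce to the finitely generated case. Since $G$ is discrete, a nonempty open pre-compact set is simply a finite set, so by the description of growth preceding Theorem \ref{T:quasi-Hermitian implies amenable}, exponential growth of $G$ means $\nu_S>1$ for some finite $S\subseteq G$. Setting $H:=\langle S\rangle$ then produces a finitely generated subgroup of exponential growth which, being a subgroup of $G$, is again elementary amenable. The purpose of passing to $H$ is that quasi-Hermitianity descends to subgroups in the discrete setting: because $L^1(H)$ and $C^*_r(H)$ embed isometrically into $L^1(G)$ and $C^*_r(G)$, the spectral-radius characterization in Theorem \ref{T:quasi-Hermitian implies amenable}$(iii)$ forces $H$ to be quasi-Hermitian whenever $G$ is. Hence it suffices to show that $H$ is \emph{not} quasi-Hermitian.

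The crux is then a purely group-theoretic input from the structure theory of elementary amenable groups: a finitely generated elementary amenable group of exponential growth contains a free sub-semigroup on two generators. This is the step I expect to be the main obstacle, as it is the only point at which genuinely external results enter; one obtains it by combining the Milnor--Wolf dichotomy for finitely generated solvable groups with the analysis of Rosenblatt and Chou of finitely generated elementary amenable groups, whereby such a group is either virtually nilpotent---hence of subexponential growth---or contains a free sub-semigroup of rank two. Granting this, Remark \ref{R:free sub semigroup-not qH} applies verbatim to $H$ and shows that $H$ is not quasi-Hermitian. Together with the reduction above, this yields that $G$ is not quasi-Hermitian, completing the contrapositive and hence the corollary.
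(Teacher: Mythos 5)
Your proposal is correct and follows essentially the same route as the paper: subexponential growth gives quasi-Hermitianity via Palma's result, and otherwise one extracts a finitely generated subgroup of exponential growth, invokes Chou's structure theorem to find a free sub-semigroup on two generators, and concludes via Remark \ref{R:free sub semigroup-not qH}. Your only (harmless) detour is applying the remark to the subgroup $H$ and then using heredity of quasi-Hermitianity under open subgroups, whereas the paper applies the remark to $G$ directly, since a free sub-semigroup of $H$ is already one of $G$.
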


\begin{proof}
	If $G$ is of subexponential growth, then $G$ is quasi-Hermitian by \cite[Proposition 3]{Palma2}. Otherwise, $G$ contains a finitely generated subgroup of exponential growth and, hence, a free sub-semigroup on two generators (see \cite{Chou}). So $G$ is not quasi-Hermitian by Remark \ref{R:free sub semigroup-not qH}.
\end{proof}

In the finitely generated case, the preceding corollary gives the characterization that a finitely generated elementary amenable discrete group is Hermitian if and only if it is almost nilpotent since every finitely generated elementary amenable discrete group is either almost nilpotent or is of exponential growth (see \cite{Chou}) and almost nilpotent groups are Hermitian (see \cite[Theorem 12.5.17]{Palma2} or \cite{Los}).
It remains an open problem whether every (quasi-)Hermitian discrete group is of subexponential growth (see \cite[Question 2]{Palma2}).

Let us consider one additional problem: When is $C_c(G)$ quasi-Hermitian in $\pf^*_p(G)$ for $1<p<2$? If $G$ is a locally compact group such that  $C_c(G)$ is quasi-Hermitian in $\pf_p^*(G)$ for every $1<p<2$, then Theorem \ref{T:interpolation triple-general alg} and Proposition \ref{P:Interpolation triple-pseudofunctions} imply $r_{\pf_p^*(G)}$ coincides with $r_{C^*_r(G)}$ on $C_c(G)_h$ for all $1<p<2$. The proofs of Theorem \ref{T:quasi-Hermitian implies amenable} and Corollary \ref{C:MainCor} then show $G$ is amenable.

\begin{prop}
If $G$ is a locally compact group such that $C_c(G)$ is quasi-Hermitian in $\pf_p^*(G)$ for every $1<p<2$, then $G$ is amenable.
\end{prop}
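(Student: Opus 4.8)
The plan is to run the argument of Theorem~\ref{T:quasi-Hermitian implies amenable} essentially verbatim, the only modification being the route by which the identity \eqref{Eq:spec. radious-pseudo vs reduced alg} is obtained. In the proof of that theorem the relation $r_{\pf_p^*(G)}(f)=r_{C^*_r(G)}(f)$ on $C_c(G)_h$ came from the quasi-Hermiticity of $C_c(G)$ inside $L^1(G)$, exploited through the triple $(L^1(G),\pf_p^*(G),C^*_r(G))$. Here $L^1(G)$ is unavailable at the bottom slot, so the interpolation is instead carried out entirely within the pseudofunction scale $\pf_s^*(G)$.

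First I would fix $p\in(1,2)$ and pick any auxiliary exponent $p_1\in(1,p)$. By Proposition~\ref{P:Interpolation triple-pseudofunctions} (taken with top exponent $2$, and recalling that $\pf_2^*(G)=C^*_r(G)$), the triple
\[
(\pf_{p_1}^*(G),\,\pf_p^*(G),\,C^*_r(G))
\]
is a spectral interpolation of triple $*$-semisimple Banach $*$-algebras relative to $\pf_{p_1}^*(G)$. The hypothesis, read at the exponent $p_1$, asserts precisely that $C_c(G)$ is a quasi-Hermitian dense $*$-subalgebra of $\pf_{p_1}^*(G)$. Hence Theorem~\ref{T:interpolation triple-general alg} applies and gives that $C_c(G)_h$ has invariant spectral radius in $(\pf_p^*(G),C^*_r(G))$, i.e. $r_{\pf_p^*(G)}(f)=r_{C^*_r(G)}(f)$ for every $f\in C_c(G)_h$. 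Since $p\in(1,2)$ was arbitrary and an admissible $p_1$ always exists, this recovers \eqref{Eq:spec. radious-pseudo vs reduced alg} for all $1<p<2$.

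From here I would copy the tail of the proof of Theorem~\ref{T:quasi-Hermitian implies amenable}. For $f\in C_c(G)_h$ supported in a pre-compact open symmetric set $S$, the same H\"older-and-growth estimate gives $\|f^n\|_{L^1(G)}\le\|f^{n-1}\|_{\pf_p^*(G)}\|f\|_{L^p(G)}\,\nu(S^n)^{1/q}$; taking $n$-th roots and using the spectral radius formula together with \eqref{Eq:spec. radious-pseudo vs reduced alg} yields $r_{L^1(G)}(f)\le r_{C^*_r(G)}(f)\,\nu_S^{1/q}$, and letting $p\to 1^+$ (so $q\to\infty$ and $\nu_S^{1/q}\to 1$) forces $r_{L^1(G)}(f)=r_{C^*_r(G)}(f)$. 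Thus $C_c(G)_h$ has invariant spectral radius in $(L^1(G),C^*_r(G))$, so the proof of Corollary~\ref{C:MainCor}, through Proposition~\ref{P:dense Spec subalg-same C envelope}, shows $C^*_r(G)$ is the $C^*$-envelope of $L^1(G)$; equivalently $C^*(G)=C^*_r(G)$, and $G$ is amenable.

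The only genuinely new decision is the placement in the triple: the algebra in whose quasi-Hermiticity is assumed must occupy the bottom slot $\fA$, with $C^*_r(G)$ at the top and the target exponent $p$ strictly between. Because the hypothesis is granted for every $p\in(1,2)$, the auxiliary exponent $p_1<p$ (with $p_1>1$, so that the hypothesis indeed applies to it) is always at hand, so there is no real obstruction; every remaining step is a direct transcription of arguments already established in the paper.
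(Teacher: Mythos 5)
Your proposal is correct and is essentially the paper's own argument: the paper proves this proposition in two sentences by citing Theorem~\ref{T:interpolation triple-general alg} together with Proposition~\ref{P:Interpolation triple-pseudofunctions} to get $r_{\pf_p^*(G)}=r_{C^*_r(G)}$ on $C_c(G)_h$ for all $1<p<2$, and then invoking the proofs of Theorem~\ref{T:quasi-Hermitian implies amenable} and Corollary~\ref{C:MainCor}, which is exactly what you flesh out. Your only cosmetic deviation is placing the hypothesis exponent $p_1$ below the target $p$ rather than concluding at exponents $p'\in(p,2)$ as the paper does in its subsequent lemma; since the hypothesis is assumed for every $p\in(1,2)$, the two placements are interchangeable.
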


We conjecture $C_c(G)$ is not quasi-Hermitian in $\pf^*_p(G)$ for every non-amenable locally compact group $G$ and $1<p<2$. This paper concludes by verifying this for groups with the rapid decay property and Kunze-Stein groups. Let us first recall the definitions of such groups.

Let $G$ be a locally compact group. It is a classical result that $L^2(G)$ is an algebra with respect to convolution product if and only if $G$ is compact. Groups with the rapid decay property and Kunze-Stein groups are classes of locally compact groups for which $L^2(G)$ is closed under left convolution by certain classes of functions which are ``close to'' $L^2(G)$.

Fix a locally compact group $G$. For a length function $\ell$ on $G$ and $\alpha>0$, we let $\omega_{\ell,\alpha}$ denote the weight on $G$ given by
$$ \omega_{\ell,\alpha}(s):=(1+\ell(s))^\alpha$$
for $s\in G$. The locally compact group $G$ has the \emph{rapid decay property} if $L^2(G)$ is closed under convolution on the left by functions from $L^2(G,\omega_{\ell,\alpha})$ for some length function $\ell$ on $G$ and $\alpha>0$. If $G$ has the rapid decay property, then the closed graph theorem shows $L^2(G,\omega_{\ell,\alpha})$ embeds continuously inside $C^*_r(G)$. The rapid decay property was first established for free groups by Haagerup (see \cite{Haag}) and later formalized by Jolissaint (see \cite{Jol}). It is known that an amenable group has rapid decay with respect to a length function $\ell$ if and only if it has polynomial growth with respect to $\ell$ (see again \cite{Jol}). Groups with the rapid decay property are a very interesting class of locally compact groups for which many difficult problems become more tractable. See \cite{Chat} for a nice survey.

In 1960, Kunze and Stein observed that if $G=SL(2,\R)$, then $L^2(G)$ is closed under convolution on the left by functions from $L^p(G)$ for every $1\leq p<2$ (see \cite{KS}). Cowling famously shows that this property is shared by every connected semisimple Lie group with finite centre and defines a \emph{Kunze-Stein group} to be a locally compact if $L^2(G)$ is closed under convolution on the left by functions from $L^p(G)$ for every $1\leq p<2$ in \cite{Cowling}. On the opposite spectrum, there are also many examples of totally disconnected Kunze-Stein groups acting on trees (see \cite{Neebia}). The closed graph theorem shows $L^p(G)$ embeds continuously inside $C^*_r(G)$ for every $1\leq p<2$ and Kunze-Stein group $G$. We finally note that compact groups are the only examples of amenable Kunze-Stein groups (see \cite{Cowling}).

\begin{lem}
	Let $G$ be a locally compact group and $1< p<q<\infty$, where $p$ and $q$ are conjugate. If $C_c(G)$ is quasi-Hermitian in $\pf_p^*(G)$ and $S$ is a nonempty, open, symmetric, pre-compact subset of $G$, then
	$$ \nu_S^{-\frac{1}{q}}\leq r_{C^*_r(G)}\left(h_S\right)$$
	where $h_S:=\frac{1_S}{\nu(S)}$.
\end{lem}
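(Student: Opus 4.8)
The plan is to fix an intermediate exponent $p'\in(p,2)$ with conjugate $q'\in(2,q)$, prove the estimate $\nu_S^{-1/q'}\le r_{C^*_r(G)}(h_S)$ for every such $p'$, and then recover the desired bound by letting $p'\to p^+$. The argument splits into a growth estimate carried out in $\pf_{p'}^*(G)$ and a transfer of spectral radii from $\pf_{p'}^*(G)$ to $C^*_r(G)$.

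For the growth estimate, I would use that $h_S\ge 0$ and $\int_G h_S\,d\nu=1$, so that (by multiplicativity of the integral under convolution) each convolution power $h_S^n$ is nonnegative with $\|h_S^n\|_{L^1(G)}=1$, and $h_S^n$ is supported in the product set $S^n$. Applying Hölder's inequality against $1_{S^n}$ gives $1=\|h_S^n\|_{L^1(G)}\le\|h_S^n\|_{L^{p'}(G)}\,\nu(S^n)^{1/q'}$, so $\|h_S^n\|_{L^{p'}(G)}\ge\nu(S^n)^{-1/q'}$. On the other hand, writing $h_S^n=\lambda_{p'}(h_S^{n-1})h_S$ and using $\|\lambda_{p'}(g)\|_{B(L^{p'}(G))}\le\|g\|_{\pf_{p'}^*(G)}$ (immediate from \eqref{Eq:involutive pseudofunctions-norm}) yields $\|h_S^n\|_{L^{p'}(G)}\le\|h_S^{n-1}\|_{\pf_{p'}^*(G)}\,\|h_S\|_{L^{p'}(G)}$. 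Combining the two inequalities, taking $n$-th roots, and letting $n\to\infty$, the spectral radius formula together with $\nu(S^n)^{1/n}\to\nu_S$ gives $\nu_S^{-1/q'}\le r_{\pf_{p'}^*(G)}(h_S)$.

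The transfer step is the crux. Since $1<p<p'<2$ and the norm estimate \eqref{E:PF-interpolation} holds on all of $L^1(G)$, Proposition \ref{P:Interpolation triple-pseudofunctions} shows $(\pf_p^*(G),\pf_{p'}^*(G),C^*_r(G))$ is a spectral interpolation of triple $*$-semisimple Banach $*$-algebras relative to $C_c(G)$, which is quasi-Hermitian in $\pf_p^*(G)$ by hypothesis. Theorem \ref{T:interpolation triple-general alg} then gives that $C_c(G)_h$ has invariant spectral radius in $(\pf_{p'}^*(G),C^*_r(G))$, and the Barnes-Hulanicki Theorem \ref{T:Barnes Theorem} upgrades this to $\Sp_{\pf_{p'}^*(G)}(f)=\Sp_{C^*_r(G)}(f)$, hence $r_{\pf_{p'}^*(G)}(f)=r_{C^*_r(G)}(f)$, for every $f\in C_c(G)$. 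I expect the main obstacle to be that $h_S=1_S/\nu(S)$ is merely bounded and compactly supported, not continuous, so this spectral equality does not apply to $h_S$ itself. I would get around this by passing to the square: a direct computation gives $h_S*h_S(x)=\nu(S)^{-2}\,\nu(S\cap xS)$, and $x\mapsto\nu(S\cap xS)=\int_G 1_S(y)1_S(x^{-1}y)\,d\nu(y)$ is continuous by strong continuity of left translation on $L^2(G)$; since it is supported in the compact set $S^2$, we get $h_S^2\in C_c(G)$. Applying the spectral equality to $h_S^2$ and using the spectral mapping identity $r(h_S^2)=r(h_S)^2$ in both algebras gives $r_{\pf_{p'}^*(G)}(h_S)^2=r_{\pf_{p'}^*(G)}(h_S^2)=r_{C^*_r(G)}(h_S^2)=r_{C^*_r(G)}(h_S)^2$, and hence $r_{\pf_{p'}^*(G)}(h_S)=r_{C^*_r(G)}(h_S)$.

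Combining the two stages yields $\nu_S^{-1/q'}\le r_{C^*_r(G)}(h_S)$ for every conjugate pair $(p',q')$ with $p<p'<2$. Since $\nu_S\ge1$, the quantity $\nu_S^{-1/q'}$ increases to $\nu_S^{-1/q}$ as $q'\uparrow q$ (i.e. $p'\downarrow p$), so taking the supremum over $p'\in(p,2)$ gives $\nu_S^{-1/q}\le r_{C^*_r(G)}(h_S)$, as desired. The only genuinely delicate point is the passage from the continuous algebra $C_c(G)$ to the averaging function $h_S$; the fact that $h_S^2$ returns to $C_c(G)$, combined with the spectral mapping theorem, is what lets the transfer in the third paragraph go through.
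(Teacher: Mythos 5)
Your proof is correct and takes essentially the same approach as the paper: both arguments combine the interpolation triple $(\pf_p^*(G),\pf_{p'}^*(G),C^*_r(G))$ with Theorem \ref{T:interpolation triple-general alg} and the Barnes-Hulanicki Theorem \ref{T:Barnes Theorem} to identify spectral radii on all of $C_c(G)$, use the same H\"{o}lder estimate against $1_{S^n}$, exploit $h_S*h_S\in C_c(G)$ to get around the discontinuity of $h_S$, and finish by letting $p'\to p^+$. The only cosmetic difference is that you bound $r_{\pf_{p'}^*(G)}(h_S)$ directly from $\|h_S^n\|_{L^1(G)}=1$, whereas the paper first establishes $r_{L^1(G)}(f)\leq r_{C^*_r(G)}(f)\,\nu_S^{1/q}$ for every $f\in C_c(G)$ supported in $S$ and then substitutes $f=h_S*h_S$ with $S$ replaced by $S^2$.
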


\begin{proof}
	Suppose $C_c(G)$ is quasi-Hermitian in $\pf_p^*(G)$. Then, by Theorems \ref{T:Barnes Theorem} and \ref{T:interpolation triple-general alg}, $\pf_{p'}(G)$ is a spectral subalgebra of $C^*_r(G)$ for each $p<p'<2$. So the argument used to prove Theorem \ref{T:quasi-Hermitian implies amenable} shows that for every $f\in C_c(G)$ with support contained in $S$,
	$$ r_{L^1(G)}(f)\leq r_{C^*_r(G)}(f)\nu_S^{\frac{1}{q'}},$$
	where $q'$ is the conjugate of $p'$. Allowing $p'$ to tend towards $p$ so that $q'$ tends towards $q$, we deduce
	$$r_{L^1(G)}(f)\leq r_{C^*_r(G)}(f)\nu_S^{\frac{1}{q}}$$
	for every $f\in L^1(G)$. Replacing $f$ with $h_S*h_S\in C_c(G)$ and $S$ with $S^2$, we conclude
	$$ 1=r_{L^1(G)}(h_S*h_S)\leq r_{C^*_r(G)}(h_S*h_S)\nu_{S^2}^{\frac{1}{q}}=r_{C^*_r(G)}(h_S)^2\nu_S^{\frac{2}{q}}.$$
This completes the proof.
\end{proof}

\begin{rem}
In the case when $G$ is a discrete group generated by a finite symmetric set $S$, the function $h_S$ and its spectrum and spectral radius in $C^*_r(G)$ and $C^*(G)$ is heavily studied in various areas of mathematics such as geometric group theory or random walk on groups. For example, a celebrated and fundamental theorem of Kesten states that $1\in \Sp_{C_r^*(G)}(h_S)$ (or equivalently, $r_{C^*_r(G)}\left(h_S\right)=1$) if and only if $G$ is amenable (see \cite[Proposition 4.7]{dHGV2}). On the other hand, $G$ has Kazdhan property (T) if and only if $1$ is an isolated point of $\Sp_{C^*(G)}(h_S)$ (see \cite[Proposition III]{dHGV1}). We refer the intersted reader to \cite{dHGV1} and \cite{dHGV2} for more details and also generalizations.
\end{rem}

\begin{thm}
	Let $G$ be a locally compact group and $1<p<2$. If $G$ either
	\begin{itemize}
		\item[(a)] has the rapid decay property, or
		\item[(b)] is a Kunze-Stein group
	\end{itemize}
	and $C_c(G)$ is quasi-Hermitian in $\pf_p^*(G)$ for some $1<p<2$, then $G$ is amenable.
\end{thm}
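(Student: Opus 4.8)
The plan is to show that $G$ has subexponential growth, i.e. that $\nu_S = 1$ for every nonempty, open, symmetric, pre-compact $S \subseteq G$; amenability then follows, either as a classical fact or via Proposition \ref{P:quasi Hermitian groups-exponential growth} together with Corollary \ref{C:MainCor}. Fix such an $S$ and let $q$ be the conjugate exponent of the given $p$, so $q \in (2,\infty)$. The key device is to apply the Lemma preceding this theorem not to $S$ but to each power $S^n$, which is again nonempty, open, symmetric and pre-compact. Since $\nu_{S^n} = \nu_S^{\,n}$, that lemma yields the lower bound
\begin{equation*}
\nu_S^{-n/q} \le r_{C^*_r(G)}(h_{S^n}) \le \|h_{S^n}\|_{C^*_r(G)}, \qquad h_{S^n} := \frac{1_{S^n}}{\nu(S^n)},
\end{equation*}
for every $n \in \N$, where the second inequality is the elementary estimate $r \le \|\cdot\|$. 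It remains to produce an upper bound on $\|h_{S^n}\|_{C^*_r(G)}$ from the rapid decay or Kunze--Stein hypothesis and to check that the two estimates can only be reconciled when $\nu_S = 1$.

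In case (b), I would fix any $p' \in (p,2)$ with conjugate $q' \in (2,q)$. The Kunze--Stein property provides a constant $C_{p'}$ with $\|f\|_{C^*_r(G)} \le C_{p'}\|f\|_{L^{p'}(G)}$ for $f \in C_c(G)$, and since $\|h_{S^n}\|_{L^{p'}(G)} = \nu(S^n)^{-1/q'}$ one gets $\|h_{S^n}\|_{C^*_r(G)} \le C_{p'}\,\nu(S^n)^{-1/q'}$. Combining this with the lower bound, taking $n$-th roots and letting $n \to \infty$ (so that $C_{p'}^{1/n} \to 1$ and $\nu(S^n)^{1/n} \to \nu_S$) gives $\nu_S^{-1/q} \le \nu_S^{-1/q'}$. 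As $q' < q$ and $\nu_S \ge 1$, the reverse inequality $\nu_S^{-1/q'} \le \nu_S^{-1/q}$ also holds, forcing $\nu_S^{-1/q} = \nu_S^{-1/q'}$; since the exponents differ, this is possible only when $\nu_S = 1$.

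In case (a), I would choose a length function $\ell$ and $\alpha > 0$ for which $L^2(G,\omega_{\ell,\alpha})$ embeds continuously into $C^*_r(G)$ with embedding constant $C$, and set $L := \sup_{s \in \overline{S}} \ell(s) < \infty$ (finite since $\ell$ is bounded on the compact set $\overline{S}$). Subadditivity of $\ell$ gives $\ell(s) \le nL$ for $s \in S^n$, so $\omega_{\ell,\alpha} \le (1+nL)^\alpha$ on $S^n$ and hence
\begin{equation*}
\|h_{S^n}\|_{C^*_r(G)} \le C\,\|h_{S^n}\|_{L^2(G,\omega_{\ell,\alpha})} \le C\,(1+nL)^{\alpha}\,\nu(S^n)^{-1/2}.
\end{equation*}
Feeding this into the lower bound, taking $n$-th roots and letting $n \to \infty$ (the polynomial factor satisfies $(1+nL)^{\alpha/n} \to 1$) yields $\nu_S^{-1/q} \le \nu_S^{-1/2}$. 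Since $q > 2$ forces $-1/q > -1/2$, while $\nu_S \ge 1$, this inequality can hold only if $\nu_S = 1$.

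In either case $\nu_S = 1$ for every admissible $S$, so $G$ has subexponential growth and is therefore amenable. I expect the main obstacle to be the asymptotic bookkeeping: one must apply the lemma to the powers $S^n$ rather than to $S$ itself, exploit $\nu_{S^n} = \nu_S^{\,n}$ and the multiplicativity of the growth limit, and verify that the constants and polynomial factors wash out under the $n$-th root so that the two sides pinch. The conceptual heart is recognizing that both hypotheses furnish an upper bound on $\|h_{S^n}\|_{C^*_r(G)}$ decaying like $\nu(S^n)^{-1/2}$ (rapid decay) or $\nu(S^n)^{-1/q'}$ (Kunze--Stein), and that the exponent $1/2$, respectively $1/q'$, strictly beats $1/q$ precisely because $q > 2$.
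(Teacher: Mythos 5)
Your proposal is correct and takes essentially the same route as the paper's proof: in both cases you apply the lemma to the powers $S^n$ (the paper phrases this as replacing $S$ with $S^n$, using $\nu_{S^n}=\nu_S^{\,n}$), bound $\|h_{S^n}\|_{C^*_r(G)}$ via the rapid decay embedding $L^2(G,\omega_{\ell,\alpha})\hookrightarrow C^*_r(G)$ or the Kunze--Stein embedding $L^{p'}(G)\hookrightarrow C^*_r(G)$ with $p'\in(p,2)$, and let the constants and polynomial factors wash out under $n$th roots to pinch $\nu_S^{-1/q}\leq \nu_S^{-1/2}$ (resp.\ $\nu_S^{-1/q'}$), forcing $\nu_S=1$. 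The concluding step, subexponential growth implies amenability, is exactly how the paper finishes as well.
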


\begin{proof}
		(a) Suppose that $G$ has the rapid decay property and $C_c(G)$ is quasi-Hermitian in $\pf_p^*(G)$ for some $1<p<2$. Choose a length function $\ell$ on $G$ and $\alpha>0$ so that $L^2(G,\omega_\alpha)\subseteq C^*_r(G)$. Let $q$ denote the conjugate of $p$, $K$ be the norm of the embedding map $L^2(G,\omega)\to C_r^*(G)$, and $S$ be any open, symmetric, pre-compact subset of $G$. The previous lemma implies
	$$ \nu_S^{-\frac{1}{q}}\leq K\|h_S\|_{2,\omega_\alpha}\leq KC_S\nu(S)^{-\frac{1}{2}},$$
	where
	$$ C_S:=\sup\{\omega_{\ell,\alpha}(s) : s\in S\}.$$
	Replacing $S$ with $S^n$, we deduce
	$$ \nu_{S^n}^{-\frac{1}{q}}\leq KC_{S^n}\nu(S^n)^{\frac{1}{2}}\leq n^\alpha KC_S \nu(S^n)^{-\frac{1}{2}}$$
	since $\ell$ is a length function. Taking the $n$th root of both sides and letting $n$ tend towards infinity, we get
	$$\nu_S^{-\frac{1}{q}}\leq \nu_S^{-\frac{1}{2}}.$$
	Since $q>2$, we deduce $\nu_S\leq 1$. So $G$ has subexponential growth and, hence, is amenable.
	
	(b) Suppose $G$ is a Kunze-Stein group and $C_c(G)$ is quasi-Hermitian in $\pf_p^*(G)$ for some $1<p<2$. Let $K_{p'}$ denote the norm of the canonical embedding $L^{p'}(G)\to C_r(G)$ for $1\leq p'<2$. If $S$ is an open, symmetric, pre-compact subset of $G$, then
	$$\nu_{S^n}^{-\frac{1}{q}}\leq K_{p'}\|h_{S^n}\|_{L^{p'}(G)}= K_{p'}\nu(S^n)^{-\frac{1}{q'}},$$
where $q'$ is the conjugate of $p'$. Taking the $n$th root of both sides and allowing $n$ to tend towards infinity, we deduce
	$$ \nu_{S}^{-\frac{1}{q}}\leq \nu_S^{-\frac{1}{q'}}.$$
	By choosing $p'$ to be in the interval $(p,2)$, we will have $q'<q$ so that we must have $\nu_{S}=1$. Thus $G$ has subexponential growth and must therefore be amenable.
\end{proof}


\subsection*{Acknowledgement}
The authors are grateful to Varvara Sheplenska for her careful reading of an early draft of this paper and conversations with her regarding this project. We would like to also thanks Yemon Choi and Michael Leinert for making comments on the earlier draft of this paper which led us to the clarifications of Propositions \ref{P:dense Spec subalg-same C envelope} and \ref{P:Interpolation triple-pseudofunctions}.

\end{document}